\def\MODE{1}
\newcommand*\widefbox[1]{\fbox{\hspace{1em}#1\hspace{1em}}}
\numberwithin{equation}{section}
\begin{document}

\title{Finite-Data Performance Guarantees for the Output-Feedback Control of an\\ Unknown System}

\if\MODE1\author{Ross Boczar \and Nikolai Matni \and Benjamin Recht}
\note{Department of Electrical Engineering and Computer Sciences\\ University of California, Berkeley\\ \today}
\else\author{Ross Boczar\footnotemark[1] \and Nikolai Matni\footnotemark[1] \and Benjamin Recht\footnotemark[1]}\fi

\maketitle

\if\MODE1\else
\footnotetext[1]{R.~Boczar, N.~Matni, and B.~Recht are with the University of California, Berkeley, CA~94720, USA. \texttt{\{boczar,brecht,nmatni\}@berkeley.edu}}
\fi

\begin{abstract}
As the systems we control become more complex, first-principle modeling becomes either impossible or intractable, motivating the use of machine learning techniques for the control of systems with continuous action spaces.  As impressive as the empirical success of these methods have been, strong theoretical guarantees of performance, safety, or robustness are few and far between.  This paper takes a step towards such providing such guarantees by establishing finite-data performance guarantees for the robust output-feedback control of an unknown FIR SISO system.  In particular, we introduce the ``Coarse-ID control'' pipeline, which is composed of a system identification step followed by a robust controller synthesis procedure, and analyze its end-to-end performance, providing quantitative bounds on the performance degradation suffered due to model uncertainty as a function of the number of experiments run to identify the system.  We conclude with numerical examples demonstrating the effectiveness of our method.
\end{abstract}

\section{Introduction}
\label{sec:intro}
%!TEX root = of-end-to-end.tex

There have been many recent results (see for example \cite{ctsRL1,ctsRL2,ctrlML1,ctrlML2,ctrlML3,dean2017sample} and the references within) that apply state-of-the-art machine learning techniques to the control of systems with continuous action spaces. As the systems we control become ever more complex, be it in their dynamics, their scale, or their interaction with the environment, moving to a data-driven approach will be inevitable: in these settings, first-principle modeling becomes either impossible or intractable.  However, as promising and exciting as recent empirical demonstrations of these techniques have been, they have, for the most part, lacked the rigorous stability, safety and robustness guarantees that the controls community has always prided itself in providing. Indeed, such guarantees are not only desirable, but \emph{necessary} when such techniques are being proposed for the control of safety critical systems or infrastructures.

This paper can be seen as a step towards providing such guarantees, albeit in a simplified setting, wherein we establish rigorous baselines of robustness and performance when controlling a single-input-single-output (SISO) system with an unknown transfer function. To do so, we combine contemporary approaches to system identification and robust control into what we term the ``Coarse-ID control'' pipeline.  In particular, we leverage the results developed in \cite{tu17} to provide finite-sample guarantees on optimally (in a certain sense) estimating a stable single-input single-output linear time-invariant (SISO LTI) system, using input-output data pairs.\footnote{We note that there have been recent results in the system identification literature (for example \cite{sysID1,sysID2}) that also seek to provide non-asymptotic guarantees of model estimation quality.}  Such finite-data guarantees are not only in stark contrast to classical system identification results, which typically only provide asymptotic guarantees of model fidelity (see~\cite{ljungBook} for an overview), but also necessary for the principled integration of these techniques with robust control, as they allow us to quantify the amount of uncertainty that our controller must contend with.  We then formulate a robust control problem using the recently developed \emph{system-level synthesis} (SLS) procedure~\cite{SysLevelSyn1}, which exploits a novel parameterization of stabilizing controllers for LTI systems that allows us to quantify performance degradation in terms of the amount of uncertainty affecting the system \cite{SysLevelRobust}.  Again this is in contrast to classical methods from robust control~\cite{ZDGbook} that are only able to provide robust stability guarantees for a prescribed amount of uncertainty.

\paragraph{Main contribution} A feature of ``Coarse-ID control,'' as described above, is that we can analyze the end-to-end performance of this pipeline in a non-asymptotic setting. Specifically, we show that the difference in cost between the optimal cost for the true system (an FIR SISO system of length $r$) and the realized cost induced by instead solving a robust SLS procedure for the approximate system is $O\left(\sqrt{\frac{\sigma^2 r}{m}}\right)$. Here, we assume that the approximate system was estimated using the ``optimal'' coarse-grained system identification procedure described in Tu et al.~\cite{tu17}, with $\sigma^2$ the measurement noise variance and $m$ the number of experiments conducted in order to construct an estimate of the system.  Finally, this paper should be viewed as a step towards generalizing the results in~\cite{dean2017sample}, which provides finite-data end-to-end performance guarantees for the classical LQR optimal control problem, to the output-feedback setting.

\paragraph{Paper organization} In Section~\ref{sec:prelim} we fix notation and quickly outline the structure used by common robust control problems. Section~\ref{sec:sls_robust} then gives an overview of the system-level synthesis framework and how it can be used to solve these problems. Finally, in Section~\ref{sec:optimality} we combine this framework with recent work on coarse-grained identification to provide quantitative bounds on how the performance of a robust controller synthesized using the SLS framework degrades when the plant to be controlled is only approximately identified. We conclude in Section~\ref{sec:experiments} with computational examples.

\section{Preliminaries}
\label{sec:prelim}
\if\MODE1
%!TEX root = of-end-to-end.tex

\paragraph{Notation}
We use boldface to denote frequency domain signals and transfer functions. The $i$-th standard basis vector is given by $e_i$. A discrete-time dynamical system
\begin{align*}
x_{k+1} = &\; Ax_k + B_k u_k\\
y_k = &\; Cx_k + D_k
 u_k\end{align*}
can be represented compactly by $\statespace{A}{B}{C}{D}$ or the tuple $(A,B,C,D)$ (with $(A,B,C)$ implying $D=0$). The set of stable real-rational proper transfer matrices is denoted $\RHinf$. Unless otherwise noted, $\norm{\cdot}$ represents the $\hinf$-norm (the induced $\ell_2 \to \ell_2$ norm) for elements in $\RHinf$ (this reduces to the spectral norm for constant matrices).

\subsection{The standard robust control problem}
We first introduce a standard form for generic robust and optimal control problems, and then show how simple disturbance attenuation and reference tracking problems can be cast into this standard form. We work with discrete-time LTI systems, but unless stated otherwise, all results extend naturally to the continuous-time setting.  A system in standard form can be described by the following equations:
\begin{align}\label{eq:std_form}
\tf z &= \tf P_{11} \tf w + \tf P_{12} \tf u \notag\\
\tf y &= \tf P_{21} \tf w + \tf P_{22} \tf u \\
\tf u &= \tf K\tf y,\notag
\end{align}
where $\tf z$ is the regulated output (e.g., deviations of the system state from a desired set-point), $\tf y$ is the measured output available to the controller $\tf K$ to compute the control action $\tf u = \tf K \tf y$, and $\tf w$ is the exogenous disturbance.  We further assume that the full plant $\tf P$ admits a joint realization\footnote{We assume throughout that $\tf P_{22}$ is strictly proper---it follows that $\tf K (I-\tf P_{22}\tf K)^{-1} \in \RHinf$ is a necesssary and sufficient condition for internal stability of the closed loop system shown in Figure \ref{fig:standard} \cite{ZDGbook}.}, i.e. 
\begin{equation}
\tf P = \begin{bmatrix}  \tf P_{11} & \tf P_{12} \\ \tf P_{21} & \tf P_{22} \end{bmatrix} = \statespace{A}{\begin{matrix} B_1 & B_2 \end{matrix}}{\begin{matrix} C_1 \\ C_2 \end{matrix}}{\begin{matrix} D_{11} & D_{12} \\ D_{21} & 0\end{matrix}}
\label{eq:joint-realization}
\end{equation}
where $\tf P_{ij} = C_i(zI-A)^{-1}B_j + D_{ij}$.

The standard optimal control problem of minimizing the gain from exogenous disturbance $\tf w$ to regulated output $\tf z$, subject to internal stability of the closed loop system shown in Figure \ref{fig:standard}, can then be posed as 
\begin{align}\label{eq:opt_ctrl}
\underset{\tf K}{\minimize} &\; \norm{\tf P_{11} + \tf P_{12} \tf K (I-\tf P_{22}\tf K)^{-1} \tf P_{21}} \\
\st &\;\; \tf K (I-\tf P_{22}\tf K)^{-1} \in \RHinf. \notag
\end{align}
\begin{figure}[t]
\centering
\includegraphics[width=.6\linewidth]{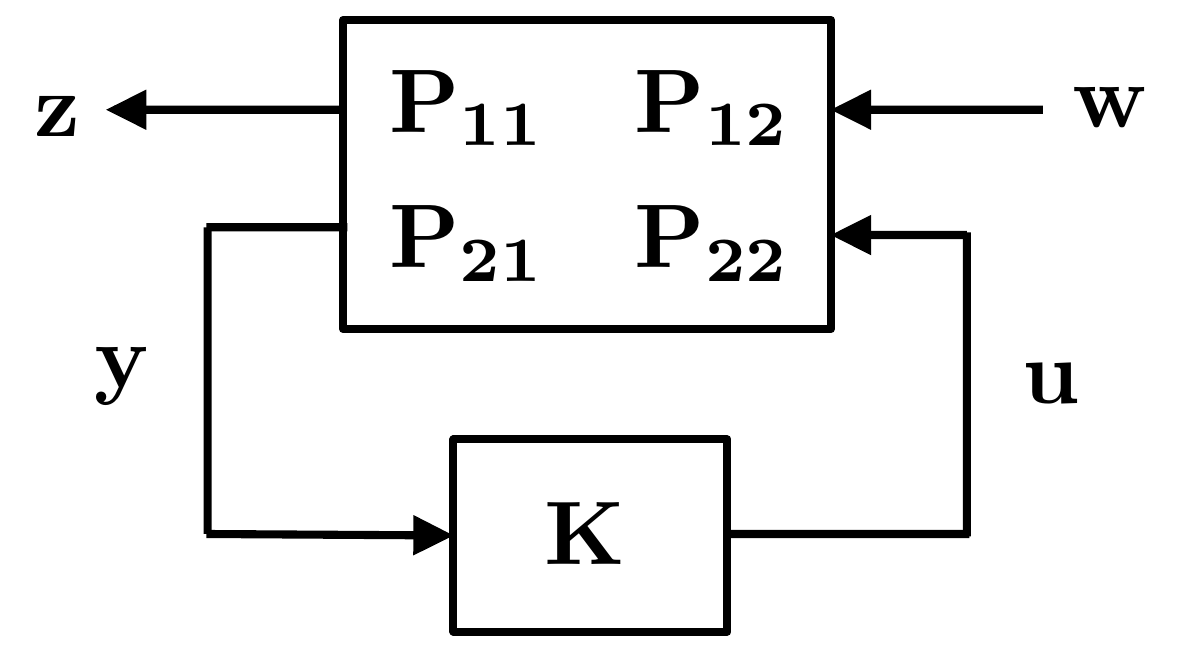}
\caption{The standard optimal control problem \eqref{eq:opt_ctrl} for the plant $\tf P$ as defined in equations \eqref{eq:std_form} and \eqref{eq:joint-realization}.}
\label{fig:standard}
\end{figure}
\paragraph{Disturbance rejection}
Consider the feedback system shown in Figure \ref{fig:dist_att}, wherein a controller $\tf K$ is in feedback with a SISO plant $\tf G$, with input disturbance $\tf d$ and measurement noise $\tf n$.  We can then define the disturbances and outputs as
\begin{align*}
\tf w = &\; \begin{bmatrix}\tf d \\ \tf n\end{bmatrix} \\
\tf z = &\;\begin{bmatrix} \tf v \\ \rho \tf u\end{bmatrix} \\
\tf y = &\;\tf v + \tf n\:,
\end{align*}
respectively, where $\rho > 0$.  Furthermore, let the plant $\tf G$ have a state-space realization $(A,B,C)$.  We then have that
\begin{align*}
\tf z = &\;\begin{bmatrix} \tf G & 0 \\ 0 & 0 \end{bmatrix} \tf w + \begin{bmatrix} \tf G \\ \rho \end{bmatrix}\tf u \\
  := &\;\tf P_{11} \tf w + \tf P_{12} \tf u\\
\tf y = &\;\begin{bmatrix} \tf G & 1 \end{bmatrix} \tf w + \tf G \tf u\\
 := &\; \tf P_{21} \tf w + \tf P_{22} \tf u,
\end{align*}
from which it follows that the generalized plant $\tf P$ admits the joint realization
\begin{equation*}
\tf P = \statespace{A}{\begin{bmatrix}B & 0\end{bmatrix} \, B}{\begin{matrix}\begin{bmatrix}C \\ 0 \end{bmatrix} \\ C \end{matrix}}{\begin{matrix}\begin{bmatrix} 0 & 0 \\ 0 & 0 \end{bmatrix} & \begin{bmatrix} 0 \\ \rho \end{bmatrix} \\ \begin{bmatrix} 0 & 1 \end{bmatrix} & 0 \end{matrix}}\:.
\end{equation*}
\begin{figure}[t]
\centering
\includegraphics[width=.6\linewidth]{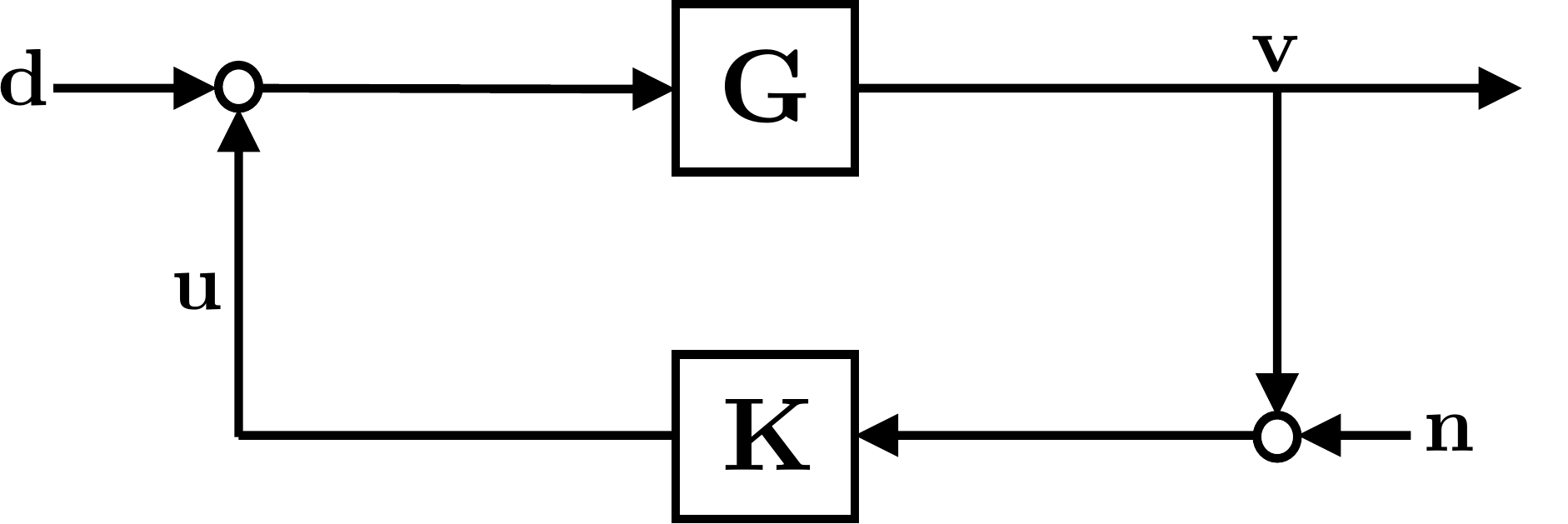}
\caption{The disturbance rejection problem for a SISO plant $\tf G$ with input disturbance $\tf d$ and measurement noise $\tf n$.}
\label{fig:dist_att}
\end{figure}
\paragraph{Reference tracking}
Now, consider the feedback system shown in Figure \ref{fig:ref_track}, wherein a controller $\tf K$ is in feedback with a SISO plant $\tf G$, with input disturbance $\tf d$ and reference signal $\tf r$.  We can then define the disturbances and outputs as
\begin{align*}
\tf w &= \begin{bmatrix}\tf d \\ \tf r\end{bmatrix}\\
\tf z  & = \begin{bmatrix} \tf e \\ \rho \tf u\end{bmatrix}\\
\tf y & = \tf e\:, \end{align*}
respectively, where $\rho > 0$.  Furthermore, let the plant $\tf G$ have a state-space realization $(A,B,C)$.  We then have that
\begin{align*}
\tf z =&\; \begin{bmatrix} \tf G & -1 \\ 0 & 0 \end{bmatrix} \tf w + \begin{bmatrix} \tf G \\ \rho \end{bmatrix} \tf u \\
 := &\;\tf P_{11} \tf w + \tf P_{12} \tf u\\
\tf y =&\; \begin{bmatrix} \tf G & -1 \end{bmatrix} \tf w + \tf G \tf u\\
 :=&\; \tf P_{21} \tf w + \tf P_{22} \tf u\:,
\end{align*}
from which it follows that the full plant $\tf P$ admits the joint realization
\begin{equation*}
\tf P = \statespace{A}{\begin{bmatrix}B & 0\end{bmatrix} \, B}{\begin{matrix}\begin{bmatrix}C \\ 0 \end{bmatrix} \\ C \end{matrix}}{\begin{matrix}\begin{bmatrix} 0 & -1 \\ 0 & 0 \end{bmatrix} & \begin{bmatrix} 0 \\ \rho \end{bmatrix} \\ \begin{bmatrix} 0 & -1 \end{bmatrix} & 0 \end{matrix}}.
\end{equation*}

\begin{figure}[t]
\centering
\includegraphics[width=.6\linewidth]{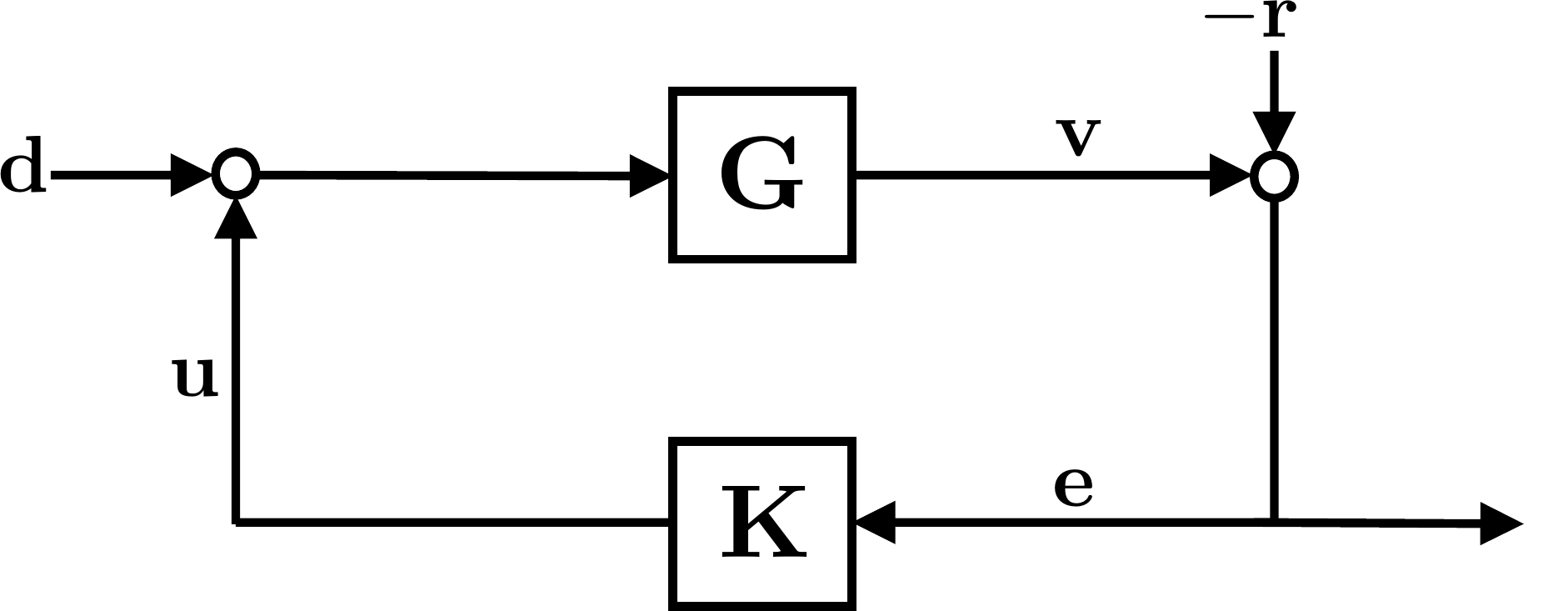}
\caption{The reference tracking problem for a SISO plant $\tf G$ with disturbance $\tf d$ and reference $\tf r$.}
\label{fig:ref_track}
\end{figure}

\paragraph{Specialization to FIR plant G}
Suppose that $\tf G$ is strictly proper and has a finite impulse response (FIR) of order $r$, i.e., that $\tf G = \sum_{t = 1}^{r-1} g_tz^{-t}$ for a collection of real scalars $\{g_t\}_{t=1}^{r-1}$.  Defining $g = [g_1, \, g_2, \, \dots,\, g_{r-1}]^\tp$, the plant $\tf G$ admits the state-space realization $(Z,e_1,g^\tp)$ where $Z$ is the right-shift operator (i.e., a matrix with ones one the sub-diagonal and zeros elsewhere).  Given the examples presented thus far, going forward we assume that
\begin{equation}
C_1 = \begin{bmatrix} g  & 0 \end{bmatrix}^\tp, \ C_2 = g^\tp,
\label{eq:FIR_Cs}
\end{equation}
as well as the standard assumption that $D_{12}^\tp C_1 = 0$.  Additionally, given that we are considering SISO systems, we can without loss of generality (by suitably rescaling $B_2$) assume that 
\begin{equation}
D_{12} = \begin{bmatrix} 0 & 1 \end{bmatrix}^\tp.
\label{eq:FIR_D12}
\end{equation}

\subsection{Coarse-grained identification}
\label{ssec:coarse-id}
As our aim is to provide end-to-end guarantees for robust control problems, we must first have a scheme to acquire an approximate plant model $\tf{\hat G}$. Toward that end, \emph{Coarse-grained identification}, as defined in Tu et al.~\cite{tu17}, describes the following procedure:
\begin{enumerate}[(i)]
\item carefully choose a series of $m$ inputs $\{\tf u_i\}$, where $\tf u_i \in \mathcal U$, and collect noisy outputs $\{\tf y_i\}$ where $\tf y_i = \tf G \tf u_i + \xi_i$ with $\xi_i\stackrel{\text{i.i.d.}}{\sim}\mathcal{N}(0,\sigma^2I)$
\item form a least-squares estimate $\tf{\hat G}$ of the impulse response of $\tf G$ using $\{\tf u_i, \:\tf y_i\}$.
\end{enumerate}
We refer to each such pair $(\tf u_i, \tf y_i)$ as an experiment.  

In~\cite{tu17}, upper and lower bounds are shown on the resulting $\hinf$ error between $\tf G$ and $\tf{\hat G}$ for different sets $\mathcal{U}$. This built on the work of Goldenschluger~\cite{golden1998}, who derived estimation rates for $\ell_\infty$-constrained inputs. We make slight modifications to the results in \cite{tu17} to instead provide $\ell_2$ error bounds on the impulse response coefficients, as these are more natural for our problem. One concern is that we will need $\ell_2$ bounds on the impulse response error, and these bounds are in term of the $\hinf$-norm. However, while they can conservatively be plugged in verbatim (as $\|\{g_k\}\|_2 \leq \|\tf G\|$), we will instead modify their proofs slightly to fit our application.

\else
\input{prelim-abridged}
\fi

\section{System-Level Synthesis}
\label{sec:sls_robust}
\if\MODE1
%!TEX root = of-end-to-end.tex
The System-Level Synthesis (SLS) framework, proposed by Wang et. al~\cite{SysLevelSyn1}, provides a parameterization of stabilizing controllers that achieve specified responses between disturbances and outputs. We briefly review here the SLS framework,  and later show in Section \ref{sec:robust-sls} how it can be modified to solve a robust optimal control problem subject to bounded uncertainty on the FIR coefficients $g$. 

For an LTI system with dynamics described by \eqref{eq:joint-realization}, we define a system response $\{\tf R, \tf M, \tf N, \tf L\}$ to be the maps satisfying
\begin{equation}
\begin{bmatrix} \tf x \\ \tf u \end{bmatrix} = \Thetamat \begin{bmatrix} \tf{\delta}_x \\ \tf{\delta}_y \end{bmatrix}. \label{eq:desired}
\end{equation}
where $\tf{\delta}_x := B_1 \tf w$ is the process noise, and $\tf{\delta}_y := D_{21} \tf w$ is the measurement noise.

We call a system response $\tf \Theta = \{\tf R, \tf M, \tf N, \tf L\}$ \emph{stable and achievable} with respect to a plant $\tf P$ if there exists an internally stabilizing controller $\tf K$ such that the control rule $\tf u = \tf K \tf y$ leads to closed loop behavior consistent with \eqref{eq:desired}. It was shown in \cite{SysLevelSyn1} that the parameterization of all stable and achievable system responses $\{\tf R, \tf M, \tf N, \tf L\}$ is defined by the following affine space:
\begin{subequations} \label{eq:achievable}
\begin{align}
\begin{bmatrix} zI - A & -B_2 \end{bmatrix}
\Thetamat &= 
\begin{bmatrix} I & 0 \end{bmatrix} \label{eq:main_1} \\
\Thetamat
\begin{bmatrix} zI - A \\ -C_2 \end{bmatrix} &= 
\begin{bmatrix} I \\ 0 \end{bmatrix} \label{eq:main_2} \\
\tf R, \tf M, \tf N \in \frac{1}{z} \mathcal{RH}_{\infty}, & \quad \tf L \in \mathcal{RH}_{\infty}. \label{eq:main_stable}
\end{align}
\end{subequations}
We call equations~\eqref{eq:main_1} - \eqref{eq:main_stable} the \emph{SLS constraints}.
The parameterization of all internally stabilizing controllers is given by the following theorem.
\begin{theorem}[Theorem 2 in \cite{SysLevelSyn1}] \label{thm:param} 
Suppose that a system response $\{\tf R, \tf M, \tf N, \tf L\}$ satisfies the SLS constraints \eqref{eq:main_1} - \eqref{eq:main_stable}. Then, $\tf K = \tf L - \tf M \tf R^{-1}\tf N$ is an internally stabilizing controller for the plant \eqref{eq:joint-realization} that yields the desired system response \eqref{eq:desired}.  Furthermore, the solutions of \eqref{eq:main_1} - \eqref{eq:main_stable} with the implementation $\tf K = \tf L - \tf M \tf R^{-1}\tf N$ parameterize all internally stabilizing controllers for the plant \eqref{eq:joint-realization}. 
\end{theorem}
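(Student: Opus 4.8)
The plan is to prove the two directions separately: \emph{sufficiency} --- if $\{\tf R,\tf M,\tf N,\tf L\}$ satisfies the SLS constraints \eqref{eq:main_1}--\eqref{eq:main_stable}, then $\tf K = \tf L - \tf M\tf R^{-1}\tf N$ internally stabilizes the plant \eqref{eq:joint-realization} and produces the closed-loop response \eqref{eq:desired}; and \emph{completeness} --- every internally stabilizing controller is of this form. Throughout I use the four transfer-function identities obtained by expanding the two affine equalities \eqref{eq:main_1}--\eqref{eq:main_2}:
\begin{align*}
(zI-A)\tf R - B_2\tf N &= I, & (zI-A)\tf M - B_2\tf L &= 0,\\
\tf R(zI-A) - \tf M C_2 &= I, & \tf N(zI-A) - \tf L C_2 &= 0.
\end{align*}
For the SISO plants considered here all of $\tf R,\tf M,\tf N,\tf L$ are scalar, so the order of factors plays no role below.

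\emph{Sufficiency.} First, $\tf R$ is invertible as a rational function: dividing $(zI-A)\tf R = I + B_2\tf N$ by $z$ and letting $z\to\infty$ gives $z\tf R\to I$ (using $\tf R,\tf N\in\tfrac1z\RHinf$), so $\det\tf R\not\equiv 0$. Since $\tf P_{22}$ is strictly proper the feedback law $\tf u = \tf K\tf y$ is well posed, so the closed loop has a unique response, and it suffices to verify that $\{\tf R,\tf M,\tf N,\tf L\}$ \emph{is} that response. The closed-loop equations for $(A,B_2,C_2)$ under $\tf u = \tf K(C_2\tf x + \tf\delta_y)$ are precisely $(zI-A)\tf R - B_2\tf N = I$ and $(zI-A)\tf M - B_2\tf L = 0$ (the state recursion), together with $\tf N = \tf K C_2\tf R$ and $\tf L = \tf K C_2\tf M + \tf K$ (the control law). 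The first pair is \eqref{eq:main_1}. For the second pair, substitute $\tf K = \tf L - \tf M\tf R^{-1}\tf N$ and eliminate the $\tf R^{-1}$ factor by repeated use of the four identities --- most directly $(zI-A)\tf R = I + B_2\tf N$, $\tf R(zI-A) = I + \tf M C_2$, $\tf L C_2 = \tf N(zI-A)$, $(zI-A)\tf M = B_2\tf L$ --- which collapses $\tf K C_2\tf R$ to $\tf N$ and $\tf K C_2\tf M + \tf K$ to $\tf L$. Hence $\{\tf R,\tf M,\tf N,\tf L\}$ is the closed-loop response. Internal stability is then immediate: the closed-loop map from $\tf\delta_y$ to $\tf u$ equals $\tf K(I-\tf P_{22}\tf K)^{-1}$, which is exactly $\tf L\in\RHinf$ by \eqref{eq:main_stable}, and by the footnote to \eqref{eq:joint-realization} $\tf K(I-\tf P_{22}\tf K)^{-1}\in\RHinf$ is equivalent to internal stability.

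\emph{Completeness.} Conversely, let $\tf K$ be internally stabilizing, fix a state-space realization of it, and let $A_{cl}$ be the resulting closed-loop dynamics matrix for \eqref{eq:joint-realization} in feedback with $\tf K$ on $\tf y$ ($A_{cl}$ is well defined because $\tf P_{22}$ is strictly proper). Define $\{\tf R,\tf M,\tf N,\tf L\}$ to be the maps from $(\tf\delta_x,\tf\delta_y)$ to $(\tf x,\tf u)$, built from $(zI-A_{cl})^{-1}$ and the closed-loop input/output matrices. Internal stability makes $A_{cl}$ Schur, so $(zI-A_{cl})^{-1}\in\tfrac1z\RHinf$, hence $\tf R,\tf M,\tf N\in\tfrac1z\RHinf$ and $\tf L\in\RHinf$, which is \eqref{eq:main_stable}. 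The two affine constraints drop out of the defining identities of the inverse: $(zI-A_{cl})(zI-A_{cl})^{-1}=I$ read block-row-wise gives \eqref{eq:main_1}, and $(zI-A_{cl})^{-1}(zI-A_{cl})=I$ read block-column-wise gives \eqref{eq:main_2}. Finally, this $\{\tf R,\tf M,\tf N,\tf L\}$ now satisfies all the SLS constraints, so by the sufficiency part the controller $\tf L - \tf M\tf R^{-1}\tf N$ built from it realizes the \emph{same} response; comparing the maps from $\tf\delta_y$ to $\tf u$, both $\tf K$ and $\tf L - \tf M\tf R^{-1}\tf N$ solve $\tf X(C_2\tf M + I) = \tf L$ for $\tf X$, and $C_2\tf M + I$ is invertible (it tends to $I$ as $z\to\infty$), so the solution is unique and $\tf K = \tf L - \tf M\tf R^{-1}\tf N$. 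Thus $\{\tf R,\tf M,\tf N,\tf L\}\mapsto\tf L - \tf M\tf R^{-1}\tf N$ is onto the set of internally stabilizing controllers.

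The step I expect to be most delicate is the coordinated elimination in the sufficiency direction --- showing the candidate $\{\tf R,\tf M,\tf N,\tf L\}$ actually solves the closed-loop equations --- since it is there that \eqref{eq:main_2}, the constraint peculiar to output feedback (state feedback needs only \eqref{eq:main_1}), is consumed, and it is easy to mismanage which identity to apply where; the mirror-image point in the completeness direction is recognizing that \eqref{eq:main_2} comes from the \emph{left} inverse identity for $zI-A_{cl}$ and \eqref{eq:main_1} from the \emph{right} one. The remaining ingredients --- well-posedness of the interconnection, invertibility of $\tf R$ and of $C_2\tf M + I$, and the observation that $\tf R^{-1}$ is improper while $\tf M\tf R^{-1}\tf N$ is proper --- are routine. (In the paper this result is invoked from \cite{SysLevelSyn1}; the above indicates the argument behind it.)
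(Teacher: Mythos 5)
The paper offers no proof of this statement: it is imported verbatim as Theorem~2 of \cite{SysLevelSyn1}, so there is nothing in-paper to compare against. Your architecture---verify directly that $\{\tf R,\tf M,\tf N,\tf L\}$ solves the closed-loop equations for sufficiency, and construct the response from the closed-loop realization for completeness---is the standard route of that reference, and it does work.

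As written, however, your argument carries a systematic labeling error that invalidates the displayed identities. With the paper's convention $\begin{bmatrix}\tf x\\ \tf u\end{bmatrix}=\begin{bmatrix}\tf R & \tf N\\ \tf M & \tf L\end{bmatrix}\begin{bmatrix}\tf\delta_x\\ \tf\delta_y\end{bmatrix}$, constraint \eqref{eq:main_1} expands to $(zI-A)\tf R - B_2\tf M = I$ and $(zI-A)\tf N - B_2\tf L = 0$, and \eqref{eq:main_2} to $\tf R(zI-A)-\tf N C_2 = I$ and $\tf M(zI-A)-\tf L C_2 = 0$; you have $\tf M$ and $\tf N$ interchanged throughout, and the interchange propagates to your control-law conditions (they should read $\tf M=\tf K C_2\tf R$ and $\tf L=\tf K C_2\tf N+\tf K$) and to the final uniqueness step (which should solve $\tf X(C_2\tf N+I)=\tf L$, not $\tf X(C_2\tf M+I)=\tf L$; as written the products are dimensionally meaningless). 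Relatedly, the claim that all four responses are scalar for a SISO plant is false: only $\tf L$ is scalar, while $\tf R$ is $n\times n$, $\tf M$ is $1\times n$, and $\tf N$ is $n\times 1$ for state dimension $n$, so you may not commute factors, and the step you yourself flag as delicate---collapsing $\tf K C_2\tf R$ and $\tf K C_2\tf N+\tf K$---is asserted rather than performed. It does go through noncommutatively once the labels are fixed, e.g.
\begin{align*}
\tf K C_2\tf R &= \tf L C_2\tf R - \tf M\tf R^{-1}\tf N C_2\tf R
= \tf M(zI-A)\tf R - \tf M\tf R^{-1}\bigl(\tf R(zI-A)-I\bigr)\tf R\\
&= \tf M(zI-A)\tf R - \bigl(\tf M(zI-A)\tf R - \tf M\bigr) = \tf M\:,
\end{align*}
using $\tf L C_2=\tf M(zI-A)$ and $\tf N C_2=\tf R(zI-A)-I$, with a parallel computation (also invoking $(zI-A)\tf N=B_2\tf L$) giving $\tf K C_2\tf N+\tf K=\tf L$. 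Two smaller points: in the completeness direction, \eqref{eq:main_1} is indeed just the plant state equation read in transfer-function form, but \eqref{eq:main_2} does not simply ``drop out'' of a left-inverse identity for $zI-A_{cl}$---it requires the explicit closed-loop formulas $\tf L=\tf K(I-\tf P_{22}\tf K)^{-1}$, $\tf M=\tf L C_2(zI-A)^{-1}$, $\tf N=(zI-A)^{-1}B_2\tf L$, $\tf R=(zI-A)^{-1}(I+B_2\tf M)$, against which the constraint is then a one-line check; and the invertibility of $C_2\tf N+I$ (it tends to $I$ at $z=\infty$ since $\tf N\in\frac1z\RHinf$) is what makes the final uniqueness argument close. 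With these repairs your proof is sound.
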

Using this parameterization, we can recast the standard optimal control problem \eqref{eq:opt_ctrl} as the SLS problem
\begin{empheq}[box=\widefbox]{align}\label{eq:SLS_optimal_problem}
&\underset{\{\tf R ,\tf M,\tf N,\tf L\}}{\text{minimize}}\: J(G,\tf \Theta)\\ &\;\; := \left\| \begin{bmatrix} C_1 & D_{12} \end{bmatrix} \Thetamat \begin{bmatrix} B_1 \\ D_{21} \end{bmatrix} + D_{11} \right\| \notag\\
& \text{subject to } \;\; \eqref{eq:main_1} - \eqref{eq:main_stable} \notag \:.
\end{empheq}
In the FIR case, we use the abbreviated notation $J(g,\tf \Theta)$ for the case where $G$ is the plant $(Z,e_1,g^\tp)$.
\begin{remark}
Although we focus on the $\hinf$ optimal control problem posed in equation \eqref{eq:SLS_optimal_problem}, the results that follow carry over naturally to $\mathcal{H}_2$ (LQG) and $\mathcal{L}_1$ optimal control problems as well.
\end{remark}

\else
\input{robust-abridged}
\fi

\section{Sample Complexity Bounds}
\label{sec:optimality}
\if\MODE1
%!TEX root = of-end-to-end.tex

We now provide finite-data performance guarantees for a controller synthesized using the system identification and robust synthesis procedures described in the previous sections.  Prior to stating our main results, we recall the problem set up and Coarse-ID Control pipeline.

We consider the identification and control of the system $(Z,e_1,g^\tp)$, which is assumed to be FIR of order $r$.  We begin with the simplified setting that the order $r$ of the true system is known, and we use the Coarse-grained identification procedure described in Section~\ref{ssec:coarse-id} to identify an approximate system  $(Z,e_1,\tilde g^\tp)$, also of order $r$, using a series of $m$ experiments.  We then use this approximate system $(Z,e_1,\tilde g^\tp)$, as well as high-probability bounds on the estimation error $\|g - \tilde g\|_2$, in a robust SLS problem (see \eqref{eq:robust_synthesis} in Section \ref{sec:robust-sls}) to compute a controller with provable suboptimality guarantees, as formalized in the following theorem.
\footnote{Here, $\gtrsim$ hides universal constants: see Lemma \ref{lem:coarseID} for an explicit characterization.}
\begin{theorem}\label{thm:sc}
Let $\tf \Theta_0$ be the optimal solution of the SLS problem~\eqref{eq:SLS_optimal_problem} for the plant $(Z,e_1,g^\tp)$, and let $\tilde g$ be an estimate of $g$ obtained using coarse-grained identification ($\sigma^2$-variance output noise only) with $m$ experiments, where $\|\tf u_i\|_p\leq 1\:\forall i$. Let $(\Thetat_*,\alpha_*)$ be the optimal solution to the robust SLS problem  \eqref{eq:robust_synthesis}  for $(Z,e_1,\gt^\tp)$, and let $\Thetah_*$ be the response achieved on the true system $g$ by the synthesized controller  $\Kt_* = \Lt_* - \Mt_* \Rt_*^{-1} \Nt_*$. Then, if $m\gtrsim \sigma^2r\|\No\|^2\log(\eta^{-1})^{\frac{1}{2}}$, with probability at least $1-\eta$, the controller $\Kt_*$ stabilizes the true system $(Z, e_1, g^\tp)$ and has a suboptimality gap bounded by
\begin{align*}
J(g, \Thetah_*) -&  J(g,\tf \Theta_0) \\
\leq &\; 8 \sqrt{\log 2\frac{\sigma^2 r^{2/\max(p,2)}}{m}} \left(1+\sqrt{2\log\eta^{-1}}\right)\bignorm{\begin{bmatrix}1+g^\tp\No\\\Lo\end{bmatrix}} \|\Ro B_1 + \No D_{21}\|\:.
\end{align*}
\end{theorem}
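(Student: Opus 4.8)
The plan is to chain three estimates on a single high-probability event. First, the \emph{identification bound}: Lemma~\ref{lem:coarseID} gives, with probability at least $1-\eta$, that $\|g-\gt\|_2\le\varepsilon$, where $\varepsilon$ is the explicit bound of that lemma, of order $\sqrt{\sigma^2 r^{2/\max(p,2)}/m}\,(1+\sqrt{\log\eta^{-1}})$; condition on this event for the rest of the argument. Second, the \emph{robust validity bound}: because $g$ lies in the uncertainty set $\{g':\|g'-\gt\|_2\le\varepsilon\}$ used to set up~\eqref{eq:robust_synthesis}, the robustness guarantee attached to that problem (developed in Section~\ref{sec:robust-sls}) says the controller $\Kt_*=\Lt_*-\Mt_*\Rt_*^{-1}\Nt_*$ internally stabilizes $(Z,e_1,g^\tp)$ and its realized cost satisfies $J(g,\Thetah_*)\le$ (optimal value of~\eqref{eq:robust_synthesis}). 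Third, the \emph{perturbation bound}: that optimal value is in turn at most $J(g,\tf\Theta_0)$ plus the claimed error term. Combining the three yields the theorem, with stabilization coming from the second step.

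For the robust validity bound, the key point is that the only plant data depending on $g$ are $C_1=\begin{bmatrix}g&0\end{bmatrix}^\tp$ and $C_2=g^\tp$, and $C_2$ enters only the achievability equation~\eqref{eq:main_2}. Writing $g^\tp=\gt^\tp+\delta g^\tp$ with $\delta g:=g-\gt$, running $\Kt_*$ (synthesized for $(Z,e_1,\gt^\tp)$) on the true plant is the same as driving the nominal closed loop with an extra signal $\delta g^\tp\tf x$ injected at the measurement; solving this self-loop and applying the parameterization of Theorem~\ref{thm:param} shows the achieved response is $\Thetah_*=(I-\Thetat_*\mathcal D)^{-1}\Thetat_*$ with $\mathcal D=\begin{bmatrix}0&0\\\delta g^\tp&0\end{bmatrix}$. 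Then $\|\Thetat_*\mathcal D\|\le\|\delta g\|_2\,\bignorm{\begin{bmatrix}\Nt_*\\\Lt_*\end{bmatrix}}\le\alpha_*<1$ by the robustness constraint in~\eqref{eq:robust_synthesis} (it is essential here that the uncertainty enters as the \emph{constant} matrix $\delta g^\tp$, so that only $\|\delta g\|_2$, and never an $\hinf$-norm of a transfer function, is ever incurred), so the loop is well posed, $\Kt_*$ stabilizes $(Z,e_1,g^\tp)$, and a Neumann-series estimate bounds $J(g,\Thetah_*)$ by the objective of~\eqref{eq:robust_synthesis} at $(\Thetat_*,\alpha_*)$, hence by its optimum. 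I would invoke this guarantee rather than reprove it.

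For the perturbation bound I would exhibit a feasible point of~\eqref{eq:robust_synthesis} built from $\tf\Theta_0$. Since $A=Z$ and $B_2=e_1$ do not depend on $g$, $\tf\Theta_0$ already satisfies~\eqref{eq:main_1} for $\gt$; to repair~\eqref{eq:main_2} set $\tf\Theta_0':=\tf\Theta_0\begin{bmatrix}I&0\\-\delta g^\tp(zI-Z)^{-1}&I\end{bmatrix}$, which a short computation shows satisfies~\eqref{eq:main_2} for $\gt$. Crucially, because $Z$ is nilpotent, $(zI-Z)^{-1}\in\tfrac{1}{z}\RHinf$, so the correction is strictly proper and $\tf\Theta_0'$ still obeys~\eqref{eq:main_stable}; and as the right factor is lower triangular, the ``$\tf N$'' and ``$\tf L$'' blocks of $\tf\Theta_0'$ are unchanged, still $\No$ and $\Lo$. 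Taking $\alpha_0$ to be the value of the robustness constraint at $\tf\Theta_0'$ (governed by $\varepsilon$ times a norm of $(\No,\Lo)$), the hypothesis $m\gtrsim\sigma^2 r\|\No\|^2\log(\eta^{-1})^{1/2}$ is calibrated exactly so that $\alpha_0$ stays bounded below $1$ (say $\le\tfrac12$), which also certifies~\eqref{eq:robust_synthesis} is feasible. Evaluating the objective of~\eqref{eq:robust_synthesis} at $(\tf\Theta_0',\alpha_0)$ — a $\tfrac{1}{1-\alpha_0}$-inflated, $\varepsilon$-robustified version of the nominal ($\gt$) cost of $\tf\Theta_0'$ — and expanding both the $\gt\leftrightarrow g$ mismatch in $C_1$ and the correction $-\delta g^\tp(zI-Z)^{-1}$, the perturbation pieces organize as $\delta g^\tp$ acting, on the right, on the true-optimal disturbance-to-state map $\Ro B_1+\No D_{21}$ and, on the left, on the true-optimal map carrying that fictitious disturbance back to $(\tf y,\tf u)$, namely $\begin{bmatrix}1+g^\tp\No\\\Lo\end{bmatrix}$; bounding each by $\|\delta g\|_2$ times the corresponding $\hinf$-norm and collecting the universal multipliers into $8$ gives $\text{(opt.\ value of~\eqref{eq:robust_synthesis})}\le J(g,\tf\Theta_0)+8\varepsilon\,\bignorm{\begin{bmatrix}1+g^\tp\No\\\Lo\end{bmatrix}}\,\|\Ro B_1+\No D_{21}\|$. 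Substituting the explicit $\varepsilon$ of Lemma~\ref{lem:coarseID} produces exactly the stated bound.

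The step I expect to be the main obstacle is the perturbation bound: building the feasible point despite $\tf\Theta_0$ being achievable only for the wrong plant (handled via the $(zI-Z)^{-1}$ correction and the nilpotence of $Z$), and then carrying out the objective bookkeeping carefully enough that exactly the two advertised closed-loop norms appear, that only $\|\delta g\|_2$ (and not a weaker transfer-function norm) is incurred, and that the constant comes out as $8$. A secondary point is checking that the sample-complexity threshold is indeed precisely what is needed to keep both $\alpha_*$ and $\alpha_0$ below $1$, so that the Neumann-series steps are legitimate.
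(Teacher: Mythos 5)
Your three-step architecture --- identification bound, then $J(g,\Thetah_*)\le Q(\gt,\Thetat_*,\alpha_*)$, then bounding that optimal value via a feasible point manufactured from $\tf\Theta_0$ --- is exactly the paper's (Lemma~\ref{lem:coarseID}, inequality~\eqref{eq:robust-objective} via Corollary~\ref{coro:special}, and Proposition~\ref{prop:optimality} via Lemma~\ref{lem:feasibility}), and your first two steps are essentially correct. One detail in step two is off: well-posedness and stabilization require $\|\Nt_*\delta^\tp\|<1$, which follows because feasibility of $(\Thetat_*,\alpha_*)$ forces $\epsilon\alpha_*\|\Nt_*\|<\alpha_*$, i.e.\ $\epsilon\|\Nt_*\|<1$; it neither requires nor yields $\alpha_*<1$. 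In the robust SLS problem $\alpha$ is an upper bound on the closed-loop norm $\bignorm{\begin{bmatrix}1+\gt^\tp\Nt\\ \Lt\end{bmatrix}}/(1-\epsilon\|\Nt\|)$, not a small-gain parameter, and the same confusion recurs when you later ask that $\alpha_0\le\tfrac12$; the condition the sample complexity actually buys is $\epsilon<(4\|\No\|)^{-1}$.

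The genuine gap is in your perturbation step. Your candidate feasible point $\tf\Theta_0'=\tf\Theta_0\left[\begin{smallmatrix}I&0\\ -\delta^\tp(zI-Z)^{-1}&I\end{smallmatrix}\right]$ does satisfy the SLS constraints for $\gt$ (your nilpotence argument for \eqref{eq:main_stable} is fine), but the bookkeeping you then assert does not go through for it. Its $\tf R$-block is $\Ro-\No\delta^\tp(zI-Z)^{-1}$, so both $J(\gt,\tf\Theta_0')$ and the robustified term acquire, at \emph{first} order in $\epsilon$, the quantity $\delta^\tp(zI-Z)^{-1}B_1$. Note that $\delta^\tp(zI-Z)^{-1}e_1=\sum_k\delta_k z^{-k}$ is precisely the transfer function $\tf G-\tf{\tilde G}$, so this term is controlled only by an $\hinf$-norm of a transfer-function error (worst case $\sqrt{r}\,\|\delta\|_2$), not by $\|\delta\|_2$ times $\|\Ro B_1+\No D_{21}\|$ --- exactly the failure mode your own parenthetical in step two warns against, and it would degrade the rate from $\sqrt{r/m}$ to roughly $r/\sqrt{m}$. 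The paper avoids this by transporting $\tf\Theta_0$ with a \emph{left} multiplication (Lemma~\ref{lem:feasibility}, eq.~\eqref{eq:hats}): with $\Rh_0=(I+\No\delta^\tp)^{-1}\Ro$ and $\Nh_0=(I+\No\delta^\tp)^{-1}\No$ one gets $\Rh_0B_1+\Nh_0D_{21}=(I+\No\delta^\tp)^{-1}(\Ro B_1+\No D_{21})$ exactly, and $\bignorm{\begin{bmatrix}1+\gt^\tp\Nh_0\\ \Lh_0\end{bmatrix}}\le(1-\epsilon\|\No\|)^{-1}\bignorm{\begin{bmatrix}1+g^\tp\No\\ \Lo\end{bmatrix}}$, so only the two norms appearing in the statement are ever incurred; a second application of~\eqref{eq:robust-objective} with the roles of $g$ and $\gt$ reversed bounds $J(\gt,\Thetah_0)$ by $J(g,\tf\Theta_0)$ plus a matching term, and $\epsilon<(4\|\No\|)^{-1}$ turns the resulting $2\epsilon/(1-2\epsilon\|\No\|)$ into $4\epsilon$, whence the factor $8$ after substituting Lemma~\ref{lem:coarseID}. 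To repair your argument you should either switch to this left-multiplied feasible point or supply a genuinely new bound on $\delta^\tp(zI-Z)^{-1}B_1$ in terms of $\|\delta\|_2\,\|\Ro B_1+\No D_{21}\|$, which I do not see how to obtain.
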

\begin{coro}\label{cor:sc}
Assume that we are in the setting of Theorem~\ref{thm:sc}, and further let there be process noise with variance $\sigma_w^2$ that enters the system via the same channel as the control input (i.e., $B_1=B_2$) and measurement noise with variance $\sigma_\xi^2$. Then, Theorem~\ref{thm:sc} holds with $\sigma^2 \gets \sigma_w^2\|\tf G\|^2+\sigma_\xi^2$\:.
\end{coro}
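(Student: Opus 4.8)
The plan is to reduce the corollary to Theorem~\ref{thm:sc} by showing that the extra disturbances merely replace the i.i.d.\ measurement noise of Section~\ref{ssec:coarse-id} by a zero-mean Gaussian effective noise whose covariance has operator norm at most $\sigma_w^2\|\tf G\|^2+\sigma_\xi^2$, and then observing that the proof of Theorem~\ref{thm:sc} (through Lemma~\ref{lem:coarseID}) uses nothing about the per-experiment noise beyond such an operator-norm bound on its covariance.

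First I would rewrite the $i$-th experiment. With process disturbance $\tf d_i\stackrel{\text{i.i.d.}}{\sim}\mathcal N(0,\sigma_w^2 I)$ injected at the plant input (this is the meaning of $B_1=B_2$) and measurement noise $\xi_i\stackrel{\text{i.i.d.}}{\sim}\mathcal N(0,\sigma_\xi^2 I)$ independent of it, the recorded output of the open-loop identification experiment is
\[
\tf y_i \;=\; \tf G(\tf u_i+\tf d_i)+\xi_i \;=\; \tf G\tf u_i+\zeta_i,\qquad \zeta_i:=\tf G\tf d_i+\xi_i,
\]
so that from the least-squares estimator's point of view the regressor is still $\tf u_i$ (with $\|\tf u_i\|_p\le 1$) and only the noise term has changed. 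Since $\tf d_i$ and $\xi_i$ are independent zero-mean Gaussians, $\zeta_i$ is a zero-mean Gaussian vector with covariance $\Sigma:=\sigma_w^2\,\mathcal T_{\tf G}\mathcal T_{\tf G}^\tp+\sigma_\xi^2 I$, where $\mathcal T_{\tf G}$ is the lower-triangular Toeplitz (convolution) matrix built from the impulse response of $\tf G$ over the experiment horizon. Because the induced $\ell_2\to\ell_2$ norm of any finite section of the convolution operator is bounded by the $\hinf$-norm of $\tf G$ (truncation cannot increase it), the triangle inequality gives the key estimate
\[
\|\Sigma\| \;\le\; \sigma_w^2\|\mathcal T_{\tf G}\|^2+\sigma_\xi^2 \;\le\; \sigma_w^2\|\tf G\|^2+\sigma_\xi^2 \;=:\; \sigma^2 .
\]

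Next I would revisit the coarse-ID estimation bound underlying Theorem~\ref{thm:sc}. The estimate $\tilde g$ is an affine function of the stacked noise $(\zeta_1,\dots,\zeta_m)$, so the error $g-\tilde g$, and every scalar extracted from it in the $\ell_2/\hinf$ tail argument, is a linear functional of a jointly Gaussian vector with block-diagonal covariance $\operatorname{diag}(\Sigma,\dots,\Sigma)$. Each such functional is sub-Gaussian with parameter controlled by $\|\Sigma\|$, and the concentration steps (Gaussian maximal inequalities over a finite net of directions in $g$-space) never use that the per-experiment noise is white or isotropic. Hence every appearance of $\sigma^2$ in Lemma~\ref{lem:coarseID}---and therefore in the hypothesis $m\gtrsim\sigma^2 r\|\No\|^2\log(\eta^{-1})^{1/2}$ and in the final suboptimality bound of Theorem~\ref{thm:sc}---may be replaced by $\sigma_w^2\|\tf G\|^2+\sigma_\xi^2$, which is exactly the claim. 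I would also remark that process noise acting during the control phase does not enter the bound at all, since $J(g,\tf\Theta)$ is an $\hinf$ worst-case gain and is insensitive to any particular disturbance realization.

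The main obstacle is the bookkeeping in the previous paragraph: one must confirm that no step of the coarse-ID argument secretly relied on the noise being isotropic (e.g.\ a variance computation that summed per-coordinate variances, or a within-experiment independence argument) rather than on the operator-norm bound $\|\Sigma\|\le\sigma^2$. Since the estimator is linear in the noise and all tail estimates factor through Gaussian---hence sub-Gaussian---bounds that see only the covariance operator norm, this is a routine but necessary check; in the worst case an isotropy-dependent step costs only a benign dimensional constant already absorbed into the $\gtrsim$.
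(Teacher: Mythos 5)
Your proposal is correct and follows essentially the same route as the paper: the paper likewise notes that the error vector becomes $\delta\sim\mathcal N(0,\Lambda)$ with $\Lambda=(Z^\tp Z)^{-1}Z^\tp(\sigma_w^2\Toep(g)\Toep(g)^\tp+\sigma_\xi^2 I)Z(Z^\tp Z)^{-1}\preceq(\sigma_w^2\|\tf G\|^2+\sigma_\xi^2)(Z^\tp Z)^{-1}$, using stability of $\tf G$ to bound the Toeplitz section, and then reruns Lemma~\ref{lem:coarseID} with the effective variance. Your explicit check that the concentration argument only sees the covariance through a Loewner/operator-norm bound is exactly the (implicit) justification the paper relies on.
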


We can further generalize these results to the setting where the order $r$ of the underlying system is not known, and that the true system is approximated by a length-$\tilde r$ FIR filter with coefficients $\tilde g$ where $\tilde r < r$. In this case, applying the triangle equality 
\begin{align*}
\|\delta\|_2 \leq \|g_{0:\tilde r-1} - \gt\|_2 + \|g_{\tilde r:r-1}\|_2
\end{align*}
gives a similar sample complexity bound, albeit one where the cost difference does not tend to zero as the number of experiments $m$ tends to infinity.
\begin{coro}\label{cor:under_sc}
Assume that we are in the setting of Theorem~\ref{thm:sc}, except let $\tilde g$ be a length-$\tilde r$ (where $\tilde r< r$) FIR estimate of $g$ obtained using the prescribed coarse-grained identification. Furthermore, assume that $\|g_{\tilde r-1:r-1}\|_2 <(4\No)^{-1}$. Then, if 
\begin{align*}
m\gtrsim \sigma^2r\left(\frac{\|\No\|}{1-4\|\No\|\|g_{\tilde r-1:r-1}\|_2}\right)^2\sqrt{\log(\eta^{-1})}\:,
\end{align*} with probability at least $1-\eta$ the controller $\Kt_* = \Lt_* - \Mt_* \Rt_*^{-1} \Nt_*$ stabilizes the true system $(Z, e_1, g^\tp)$ and has a suboptimal cost bounded by
\begin{align*}
J(g,\Thetah_*)  -&  J(g,\tf \Theta_0)   \\
\leq & \Bigg(8 \sqrt{\log 2\frac{\sigma^2 r^{2/\max(p,2)}}{m}}\left(1+\sqrt{2\log\eta^{-1}}\right)\bignorm{\begin{bmatrix}1+g^\tp\No\\\Lo\end{bmatrix}}+ 4\|g_{\tilde r-1:r-1}\|_2\Bigg)\\
&\; \times\|\Ro B_1 + \No D_{21}\|\:.
\end{align*}
\end{coro}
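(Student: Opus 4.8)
The plan is to reuse the proof of Theorem~\ref{thm:sc} almost verbatim, changing only the bound that is plugged in for the impulse-response error $\delta := g - \gt$ (with $\gt$ zero-padded to length $r$). The proof of Theorem~\ref{thm:sc} should separate into two logically independent parts that I would reuse here: (i) a \emph{deterministic} robust-SLS estimate stating that, whenever $\|\delta\|_2 \le (4\|\No\|)^{-1}$, the controller $\Kt_* = \Lt_* - \Mt_*\Rt_*^{-1}\Nt_*$ produced by the robust SLS problem~\eqref{eq:robust_synthesis} (run for the zero-padded $\gt$, with its uncertainty budget set to whatever high-probability bound on $\|\delta\|_2$ is available) stabilizes the true plant $(Z,e_1,g^\tp)$ and obeys $J(g,\Thetah_*) - J(g,\tf\Theta_0) \le 4\|\delta\|_2\,\bignorm{\begin{bmatrix}1+g^\tp\No\\\Lo\end{bmatrix}}\,\|\Ro B_1 + \No D_{21}\|$; and (ii) the high-probability coarse-ID bound on $\|\delta\|_2$ from Lemma~\ref{lem:coarseID}. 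Part (i) concerns only the algebra of the SLS equations and the scalar $\|\delta\|_2$; it never uses how $\gt$ was obtained or that $\gt$ has length exactly $r$ --- once $\gt$ is zero-padded to length $r$ the robust SLS problem and its analysis are literally those of a length-$r$ FIR plant --- so part (i) carries over unchanged, and everything new in the corollary is a replacement for part (ii).

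For the new bound, I would split $\delta$ with the triangle inequality displayed just above the statement, $\|\delta\|_2 \le \|g_{0:\tilde r-1} - \gt\|_2 + \|g_{\tilde r:r-1}\|_2$, and bound the tail crudely by $\tau := \|g_{\tilde r-1:r-1}\|_2$ (adjoining one more coefficient only enlarges the norm). The first summand is exactly the estimation error on the coefficients actually fit, so Lemma~\ref{lem:coarseID} applied with filter length $\tilde r$ gives, with probability at least $1-\eta$, $\|g_{0:\tilde r-1} - \gt\|_2 \le \varepsilon_m := 2\sqrt{\log 2\,\tfrac{\sigma^2\tilde r^{2/\max(p,2)}}{m}}\big(1+\sqrt{2\log\eta^{-1}}\big)$, which is at most the same expression with $\tilde r$ replaced by $r$ since $\tilde r \le r$. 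The tail $\tau$ is a fixed constant independent of $m$ and is assumed by hypothesis to satisfy $\tau < (4\|\No\|)^{-1}$. Hence on the same probability-$(1-\eta)$ event, $\|\delta\|_2 \le \varepsilon_m + \tau$.

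Finally, I would choose $m$ large enough that the smallness condition of part (i) holds for this bound, i.e.\ $\varepsilon_m + \tau \le (4\|\No\|)^{-1}$. Because $\tau < (4\|\No\|)^{-1}$, this is equivalent to $\varepsilon_m \le (4\|\No\|)^{-1} - \tau = \tfrac{1-4\|\No\|\tau}{4\|\No\|}$, and solving this for $m$ --- using $r^{2/\max(p,2)} \le r$ and folding the $\eta$-dependence into the hidden constant exactly as in Theorem~\ref{thm:sc} --- reproduces the stated requirement $m \gtrsim \sigma^2 r\,\big(\tfrac{\|\No\|}{1-4\|\No\|\|g_{\tilde r-1:r-1}\|_2}\big)^2\sqrt{\log\eta^{-1}}$. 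Under this condition, part (i) applies with $\|\delta\|_2 \le \varepsilon_m + \tau$, so $\Kt_*$ stabilizes $(Z,e_1,g^\tp)$ and $J(g,\Thetah_*) - J(g,\tf\Theta_0) \le 4(\varepsilon_m+\tau)\,\bignorm{\begin{bmatrix}1+g^\tp\No\\\Lo\end{bmatrix}}\,\|\Ro B_1 + \No D_{21}\|$, and writing out $4\varepsilon_m = 8\sqrt{\log 2\,\tfrac{\sigma^2 r^{2/\max(p,2)}}{m}}\big(1+\sqrt{2\log\eta^{-1}}\big)$ recovers the first term of the claimed bound. The step I expect to be the main obstacle is the last mile: part (i), as I have stated it, attaches the factor $\bignorm{\begin{bmatrix}1+g^\tp\No\\\Lo\end{bmatrix}}$ to \emph{all} of $\|\delta\|_2$, whereas the corollary attaches it only to the estimation part and lets the truncation tail enter as $4\|g_{\tilde r-1:r-1}\|_2\,\|\Ro B_1 + \No D_{21}\|$. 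Matching this forces one back into the cost-perturbation argument of Theorem~\ref{thm:sc} to split the effect of the truncation tail (which perturbs the cost matrix $C_1 = \begin{bmatrix}g & 0\end{bmatrix}^\tp$ directly) from the effect of the estimation uncertainty (against which the robust SLS problem is explicitly hedged) \emph{before} the factor $\bignorm{\begin{bmatrix}1+g^\tp\No\\\Lo\end{bmatrix}}$ is pulled out; making that separation precise is the one genuinely non-mechanical piece of bookkeeping, and everything else is a routine substitution.
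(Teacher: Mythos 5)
Your approach is exactly the paper's: the paper's entire proof of this corollary is the one-sentence triangle-inequality remark preceding the statement, and your write-up is that remark carried out --- split $\|\delta\|_2 \le \|g_{0:\tilde r-1}-\gt\|_2 + \|g_{\tilde r:r-1}\|_2$, bound the first summand via Lemma~\ref{lem:coarseID}, and feed the sum into Proposition~\ref{prop:optimality} exactly as in the proof of Theorem~\ref{thm:sc}.

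The ``last mile'' you flag is real, but it is a looseness in the corollary's displayed bound rather than a gap in your argument. Substituting $\epsilon = \varepsilon_m + \tau$ into Proposition~\ref{prop:optimality} (with $\epsilon \le (4\|\No\|)^{-1}$ guaranteed by your choice of $m$) yields $4(\varepsilon_m+\tau)\,\bignorm{\begin{bmatrix}1+g^\tp\No\\\Lo\end{bmatrix}}\,\|\Ro B_1+\No D_{21}\|$, i.e.\ the factor $\bignorm{\begin{bmatrix}1+g^\tp\No\\\Lo\end{bmatrix}}$ multiplies the tail term as well. Since $\No\in \tfrac{1}{z}\RHinf$ forces $1+g^\tp\No\to 1$ at $z=\infty$, that factor is at least $1$, so the bound as printed (with the bare $4\|g_{\tilde r-1:r-1}\|_2$) is \emph{stronger} than what this route delivers, and the paper supplies no argument for dropping the factor. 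The refinement you sketch does not rescue it: in the computation preceding~\eqref{eq:robust-objective} the estimation error and the truncation tail enter through the single vector $\delta$ in \emph{both} places (the perturbation of $C_1$ and the $(I+\No\delta^\tp)^{-1}$ correction to the response), so the tail cannot be charged only to $\|\Ro B_1+\No D_{21}\|$. You should state your conclusion with the norm factor on both terms; that is all the paper actually proves. One further gloss you share with the paper: invoking Lemma~\ref{lem:coarseID} for $\|g_{0:\tilde r-1}-\gt\|_2$ tacitly treats the length-$\tilde r$ least-squares fit as a purely Gaussian-error estimate of $g_{0:\tilde r-1}$, which ignores the deterministic regression bias induced by the unmodeled coefficients unless the fit is performed over $T\ge r$ lags and then truncated.
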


To prove the above results, we first derive a robust variant of the SLS framework presented in Section \ref{sec:sls_robust}, and then show how it can be used to pose a robust synthesis problem that admits suboptimality guarantees.  In particular, these guarantees characterize the degradation in performance of the synthesized controller as a function of the size of the uncertainty on the transfer function coefficients $g$.  We then combine this characterization of performance degradation with high-probability bounds on the estimation error produced by the coarse-grained identification procedure to provide an end-to-end analysis of the Coarse-ID control procedure.

\subsection{Robust SLS}
\label{sec:robust-sls}
As we only have access to approximately identified plants, we need a robust variant of Theorem \ref{thm:param}. First, we introduce a robust version of~\eqref{eq:main_2},
\begin{equation}
\Thetatmat
\begin{bmatrix} zI - A \\ -C_2 \end{bmatrix} = 
\begin{bmatrix} I + \tf\Delta_1 \\ \tf\Delta_2 \end{bmatrix}. \label{eq:robust}
\end{equation}
We call equations~\eqref{eq:main_1},~\eqref{eq:robust}, and~\eqref{eq:main_stable} the \emph{robust SLS constraints}.  We now have the ingredients needed to connect the main and robust SLS constraints. The proof is mostly algebraic and is thus deferred to the Appendix.
\begin{restatable}[Robust Equivalence]{lemma}{robustequiv}\label{lem:robust-equiv}
Consider system reponses $\Thetat=\{\Rt ,\Mt ,\Nt ,\Lt \}$ and $\Thetah=\{\Rh,\Mh,\Nh,\Lh\}$, where the latter is given by 
\begin{align}\label{eq:robust-response}
\Rh = &\; (I+\tf\Delta_1)^{-1} \Rt \\
\Mh = &\; \Mt - \tf\Delta_2(I+\tf\Delta_1)^{-1}\Rt \notag\\
\Nh = &\; (I+\tf\Delta_1)^{-1} \Nt \notag\\
\Lh = &\; \Lt - \tf\Delta_2(I+\tf\Delta_1)^{-1}\notag\Nt\:,
\end{align} where by assumption $(I+\tf\Delta_1)^{-1}$ exists and is in $\RHinf$. Let $\tf G = (A,B,C,D)$ be a given plant, and consider the following statements.
\begin{enumerate}[(i)]
    \item $\Thetat$ satisfies the robust SLS constraints for $\tf G$.
    \item $\Thetah$ satisfies the SLS constraints for $\tf G$.
\end{enumerate}
Under the assumptions, $(i)\implies (ii)$. Furthermore, let $\tf G'=(A,B,C',D)$, and let
\begin{align*}
\tf \Delta_1 = &\; -\Nt(C-C') \\
\tf \Delta_2 = &\; -\Lt(C-C') \:.
\end{align*}
Then, $(i)$ is equivalent to a third statement $(iii)$: $\Thetat$ satisfies the SLS constraints for $\tf G'$.
\end{restatable}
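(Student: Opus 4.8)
The plan is to reduce the whole lemma to block-matrix bookkeeping on the stacked response, using that the substitution \eqref{eq:robust-response} is just a single left-multiplication. First I would record, by expanding the block product, that \eqref{eq:robust-response} is exactly
\[
\begin{bmatrix}\Rh & \Nh\\\Mh & \Lh\end{bmatrix} \;=\; \Phi^{-1}\,\Thetatmat, \qquad \Phi \;:=\; \begin{bmatrix} I+\tf\Delta_1 & 0\\ \tf\Delta_2 & I\end{bmatrix},
\]
where $\Phi$ is block lower-triangular with diagonal blocks $I+\tf\Delta_1$ and $I$, hence invertible in $\RHinf$ once $(I+\tf\Delta_1)^{-1}\in\RHinf$ (we also take $\tf\Delta_1,\tf\Delta_2\in\RHinf$, as is the case in our application). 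Since $\tfrac1z\RHinf$ is an ideal of $\RHinf$, the membership conditions \eqref{eq:main_stable} for $\Thetah$ follow immediately from those for $\Thetat$; e.g. $\Rh=(I+\tf\Delta_1)^{-1}\Rt\in\tfrac1z\RHinf$ and $\Lh=\Lt-\tf\Delta_2(I+\tf\Delta_1)^{-1}\Nt\in\RHinf$, and similarly for $\Mh,\Nh$.

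For $(i)\Rightarrow(ii)$ it then remains to check \eqref{eq:main_1} and \eqref{eq:main_2} for $\Thetah$. The measurement-side constraint \eqref{eq:main_2} is immediate: right-multiplying the displayed identity by $\left[\begin{smallmatrix} zI-A\\ -C\end{smallmatrix}\right]$ and invoking \eqref{eq:robust} gives $\Phi^{-1}\left[\begin{smallmatrix} I+\tf\Delta_1\\ \tf\Delta_2\end{smallmatrix}\right]=\left[\begin{smallmatrix} I\\0\end{smallmatrix}\right]$. For \eqref{eq:main_1} it suffices to show $\begin{bmatrix} zI-A & -B_2\end{bmatrix}\Phi = \begin{bmatrix} zI-A & -B_2\end{bmatrix}$, i.e. the compatibility identity $(zI-A)\tf\Delta_1 = B_2\tf\Delta_2$; this is the one step that is not purely formal, and I expect it to be the only real content of the argument. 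I would obtain it by right-multiplying \eqref{eq:main_1} (which holds for $\Thetat$ under $(i)$) by $\left[\begin{smallmatrix} zI-A\\ -C\end{smallmatrix}\right]$, substituting \eqref{eq:robust} on the left and simplifying $\begin{bmatrix} I & 0\end{bmatrix}\left[\begin{smallmatrix} zI-A\\-C\end{smallmatrix}\right]=zI-A$ on the right, which yields $(zI-A)(I+\tf\Delta_1) - B_2\tf\Delta_2 = zI-A$. With this in hand, $\begin{bmatrix} zI-A & -B_2\end{bmatrix}\Phi^{-1}=\begin{bmatrix} zI-A & -B_2\end{bmatrix}$, so $\begin{bmatrix} zI-A & -B_2\end{bmatrix}\Phi^{-1}\Thetatmat = \begin{bmatrix} zI-A & -B_2\end{bmatrix}\Thetatmat = \begin{bmatrix} I & 0\end{bmatrix}$, which is \eqref{eq:main_1} for $\Thetah$.

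For the equivalence $(i)\Leftrightarrow(iii)$ with $\tf\Delta_1=-\Nt(C-C')$ and $\tf\Delta_2=-\Lt(C-C')$ (here $C,C'$ denote the measurement matrices of $\tf G,\tf G'$ that enter the SLS constraints), the point is that \eqref{eq:main_1} and \eqref{eq:main_stable} do not involve $C$ and hence read identically for $\tf G$ and $\tf G'$, so only \eqref{eq:robust} (for $\tf G$) versus \eqref{eq:main_2} (for $\tf G'$) must be compared. Expanding $\Thetatmat\left[\begin{smallmatrix} zI-A\\-C\end{smallmatrix}\right]$ and $\Thetatmat\left[\begin{smallmatrix} zI-A\\-C'\end{smallmatrix}\right]$ and subtracting gives $\Thetatmat\left[\begin{smallmatrix} zI-A\\-C\end{smallmatrix}\right] = \Thetatmat\left[\begin{smallmatrix} zI-A\\-C'\end{smallmatrix}\right] + \left[\begin{smallmatrix}-\Nt(C-C')\\-\Lt(C-C')\end{smallmatrix}\right] = \Thetatmat\left[\begin{smallmatrix} zI-A\\-C'\end{smallmatrix}\right] + \left[\begin{smallmatrix}\tf\Delta_1\\\tf\Delta_2\end{smallmatrix}\right]$, so \eqref{eq:robust} holds for $\Thetat$ and $\tf G$ if and only if $\Thetatmat\left[\begin{smallmatrix} zI-A\\-C'\end{smallmatrix}\right]=\left[\begin{smallmatrix} I\\0\end{smallmatrix}\right]$, i.e. \eqref{eq:main_2} holds for $\Thetat$ and $\tf G'$. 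Overall I anticipate no genuine obstacle beyond keeping track of which block identity is invoked where; the lone substantive fact is the compatibility identity $(zI-A)\tf\Delta_1 = B_2\tf\Delta_2$, which is precisely where the consistency between the two robust SLS constraints is used up.
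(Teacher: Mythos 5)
Your proof is correct and follows essentially the same route as the paper's: your $\Phi^{-1}$ is exactly the paper's transformation $V$, the compatibility identity $(zI-A)\tf\Delta_1 = B_2\tf\Delta_2$ is obtained by the same combination of \eqref{eq:main_1} and \eqref{eq:robust}, and the $(i)\Leftrightarrow(iii)$ step is the same observation that the two constraint sets coincide once $C$-dependence is isolated. The only difference is that you assume $\tf\Delta_2\in\RHinf$ outright, whereas the paper derives it from the second row of \eqref{eq:robust} (namely $\tf\Delta_2=\Mt(zI-A)-\Lt C_2\in\RHinf$ since $\Mt\in\frac{1}{z}\RHinf$ and $\Lt\in\RHinf$), so that extra hypothesis can be dropped.
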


A chain of corollaries follow from Lemma \ref{lem:robust-equiv} that will be useful in quantifying the performance achieved on the true system of a controller designed using an approximate system model. Unless otherwise noted, let $\Thetat,\:\Thetah$ be defined as in Lemma~\ref{lem:robust-equiv}. 

\begin{coro}\label{cor:robust} Suppose that $\Thetat$ satisfies the robust SLS constraints for the system \eqref{eq:joint-realization}. Then, the controller $\Kt = \Lt - \Mt \Rt^{-1} \Nt$ stabilizes the system~\eqref{eq:joint-realization} and achieves the closed-loop system response $\Thetah$ if and only if $(I+\tf\Delta_1)^{-1} \in \RHinf$.
\end{coro}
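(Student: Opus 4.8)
The plan is to treat the two implications separately: for ``if'' I would chain Lemma~\ref{lem:robust-equiv} with Theorem~\ref{thm:param} and then check that the resulting controller is literally $\Kt$; for ``only if'' I would use the completeness half of Theorem~\ref{thm:param} to force $\Thetah$ to be a stable achievable response and then read off stability of $(I+\tf\Delta_1)^{-1}$.

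For the ``if'' direction, suppose $(I+\tf\Delta_1)^{-1}\in\RHinf$. Since $\Thetat$ satisfies the robust SLS constraints for~\eqref{eq:joint-realization}, implication $(i)\implies(ii)$ of Lemma~\ref{lem:robust-equiv} shows that $\Thetah=\{\Rh,\Mh,\Nh,\Lh\}$, defined by~\eqref{eq:robust-response}, satisfies the ordinary SLS constraints for~\eqref{eq:joint-realization}. Theorem~\ref{thm:param} applied to $\Thetah$ then gives that the controller $\Lh-\Mh\Rh^{-1}\Nh$ internally stabilizes~\eqref{eq:joint-realization} and achieves the closed-loop response $\Thetah$. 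It remains to verify that this controller equals $\Kt=\Lt-\Mt\Rt^{-1}\Nt$: using $\Rh^{-1}=\Rt^{-1}(I+\tf\Delta_1)$ gives $\Rh^{-1}\Nh=\Rt^{-1}\Nt$, hence $\Mh\Rh^{-1}\Nh=\Mt\Rt^{-1}\Nt-\tf\Delta_2(I+\tf\Delta_1)^{-1}\Nt$, and this last term cancels the matching term in $\Lh$, leaving $\Lh-\Mh\Rh^{-1}\Nh=\Lt-\Mt\Rt^{-1}\Nt=\Kt$. So $\Kt$ stabilizes~\eqref{eq:joint-realization} and achieves $\Thetah$.

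For the ``only if'' direction, suppose $\Kt$ internally stabilizes~\eqref{eq:joint-realization} and achieves $\Thetah$. By the completeness half of Theorem~\ref{thm:param}, the closed-loop response realized by any internally stabilizing controller satisfies the SLS constraints~\eqref{eq:main_1}--\eqref{eq:main_stable}; in particular every component of $\Thetah$ is stable, so $\Rh,\Mh,\Nh\in\frac1z\RHinf$ and $\Lh\in\RHinf$. Combining~\eqref{eq:main_1} for $\Thetah$ with~\eqref{eq:main_1} for $\Thetat$ (which is part of the robust SLS constraints) and cancelling $\Rt$ on the right---legitimate because~\eqref{eq:main_1} forces the leading Markov parameter of $\Rt$ to be $I$, so $\Rt$ is invertible as a rational matrix---yields $(I+\tf\Delta_1)^{-1}=\bigl[I+(zI-A)^{-1}B_2\tf\Delta_2\bigr]^{-1}$. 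In the FIR realization $A=Z$ is nilpotent, so $(zI-A)^{-1}\in\frac1z\RHinf$ and $I+(zI-A)^{-1}B_2\tf\Delta_2\in\RHinf$ is biproper; combined with stability of $\Rh=(I+\tf\Delta_1)^{-1}\Rt$ and $\Nh=(I+\tf\Delta_1)^{-1}\Nt$, this should pin down $(I+\tf\Delta_1)^{-1}\in\RHinf$.

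I expect the ``only if'' direction to be the main obstacle. The ``if'' direction is essentially bookkeeping once the controller identity $\Lh-\Mh\Rh^{-1}\Nh=\Kt$ is in hand. For ``only if'' one has to rule out the possibility that an unstable pole of $(I+\tf\Delta_1)^{-1}$ is cancelled simultaneously inside each of $\Rh,\Mh,\Nh,\Lh$, so that $\Thetah$ is stable even when $(I+\tf\Delta_1)^{-1}$ is not; this is where the structural identity $(I+\tf\Delta_1)^{-1}=[I+(zI-A)^{-1}B_2\tf\Delta_2]^{-1}$ and the strict properness of $(zI-A)^{-1}$ in the FIR setting do the work. (An alternative, lighter reading uses the convention that $\Thetah$ in~\eqref{eq:robust-response} is only defined when $(I+\tf\Delta_1)^{-1}\in\RHinf$, in which case ``$\Kt$ achieves $\Thetah$'' already presupposes the conclusion and the implication is immediate.)
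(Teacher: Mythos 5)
Your ``if'' direction is correct and is exactly the paper's argument: chain Lemma~\ref{lem:robust-equiv} with Theorem~\ref{thm:param}, then identify the resulting controller with $\Kt$. You even supply the algebra $\Rh^{-1}\Nh=\Rt^{-1}\Nt$ and the cancellation of the $\tf\Delta_2(I+\tf\Delta_1)^{-1}\Nt$ terms, which the paper only asserts (``Moreover, $\tf{\hat K}$ is precisely equal to $\Kt$''). No issues there.

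The ``only if'' direction is where you depart from the paper, and your route does not close. The ``structural identity'' you want to lean on is vacuous: the proof of Lemma~\ref{lem:robust-equiv} shows that the robust constraint~\eqref{eq:robust} forces $B_2\tf\Delta_2=(zI-A)\tf\Delta_1$, hence $(zI-A)^{-1}B_2\tf\Delta_2=\tf\Delta_1$ and your identity reduces to $(I+\tf\Delta_1)^{-1}=(I+\tf\Delta_1)^{-1}$ --- it carries no information beyond what you started with. Moreover, even granting that $I+(zI-A)^{-1}B_2\tf\Delta_2$ is stable and biproper, that does not imply its inverse is stable (consider $(z-2)/(z-0.5)$: stable, biproper, unstable inverse), so the simultaneous-cancellation scenario you flag at the end is not actually excluded by anything you wrote; you correctly identify it as the obstacle but do not resolve it. The paper's converse is a short contrapositive that avoids all of this: if $(I+\tf\Delta_1)^{-1}$ exists but is not in $\RHinf$, then $\Rh=(I+\tf\Delta_1)^{-1}\Rt$ is not in $\RHinf$ (using that $\Rt$ is square and invertible), so $\Thetah$ is not a stable system response and therefore cannot be the closed-loop response achieved by an internally stabilizing controller; and if the inverse does not exist at all, the response~\eqref{eq:robust-response} is not even well defined. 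If you want to complete your version, the step you actually need is the paper's --- pass instability of $(I+\tf\Delta_1)^{-1}$ directly to $\Rh$ through the invertible factor $\Rt$ --- not an inversion of the biproper factor. (Your parenthetical ``lighter reading'' is not what the paper does: Lemma~\ref{lem:robust-equiv} assumes $(I+\tf\Delta_1)^{-1}\in\RHinf$, but the corollary's converse explicitly treats the case where the inverse exists as a rational matrix without being stable.)
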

\begin{proof}  First note that the robust SLS constraints imply that $\tf \Delta_2 \in \RHinf$.
Next, assume $(I+\tf \Delta_1)^{-1}\in \RHinf$. By Lemma~\ref{lem:robust-equiv}, $\Thetah$ satisfies the SLS constraints for~\eqref{eq:joint-realization}. Thus, by Theorem~\ref{thm:param}, $\tf{\hat K}=\Lh-\Mh\Rh^{-1}\Nh$ is stabilizing and achieves the closed-loop response $\Thetah$. Moreover, $\tf{\hat K}$ is precisely equal to $\Kt$.  

Conversely, assume $(I+\tf \Delta_1)^{-1}$ exists but is not in $\RHinf$ (if it does not exist the system response~\eqref{eq:robust-response} is obviously not well-defined). It then follows that $\Rh =  (I+\tf\Delta_1)^{-1} \Rt$ is not in $\RHinf$ as $\Rt$ is square and invertible.
\end{proof}
This immediately gives us a sufficient condition for robustness of the SLS procedure. 
\begin{coro}\label{coro:sufficient} Suppose that $\Thetat$ satisfies the robust SLS constraints for the system \eqref{eq:joint-realization}.  A sufficient condition for the controller $\Kt = \Lt - \Mt \Rt^{-1} \Nt$ to stabilize the system \eqref{eq:joint-realization} and achieve closed-loop response $\Thetah$ is that $\|\tf\Delta_1\| < 1$, for any induced norm $\|\cdot \|$.
\end{coro}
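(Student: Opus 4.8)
The plan is to deduce this from the ``if and only if'' in Corollary~\ref{cor:robust}. Since $\Thetat$ is assumed to satisfy the robust SLS constraints, Corollary~\ref{cor:robust} already tells us that $\Kt=\Lt-\Mt\Rt^{-1}\Nt$ stabilizes~\eqref{eq:joint-realization} and achieves the closed-loop response $\Thetah$ as soon as $(I+\tf\Delta_1)^{-1}\in\RHinf$. So the entire proof reduces to showing that $\|\tf\Delta_1\|<1$, for an induced norm, forces $(I+\tf\Delta_1)^{-1}\in\RHinf$.

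I would then proceed in three short steps. (1) \emph{$\tf\Delta_1\in\RHinf$.} Reading off the top block of~\eqref{eq:robust}, $\tf\Delta_1 = \Rt(zI-A)-\Nt C_2 - I = z\Rt - \Rt A - \Nt C_2 - I$; by~\eqref{eq:main_stable} we have $\Rt,\Nt\in\frac{1}{z}\RHinf$, hence $z\Rt\in\RHinf$ and each of the remaining three terms lies in $\RHinf$, so $\tf\Delta_1\in\RHinf$ (this is the $\tf\Delta_1$-analogue of the remark opening the proof of Corollary~\ref{cor:robust} for $\tf\Delta_2$). (2) \emph{Reduce the hypothesis to $\|\tf\Delta_1\|_{\hinf}<1$.} The $\hinf$-norm is exactly the $\ell_2\to\ell_2$ induced norm, and among the $\ell_p\to\ell_p$ induced norms it is the smallest, which follows from convexity of $1/p\mapsto\log\|\tf\Delta_1\|_{\ell_p\to\ell_p}$ (Riesz--Thorin) together with the $p\leftrightarrow q$ duality symmetry of convolution operators; hence $\|\tf\Delta_1\|_{\hinf}\le\|\tf\Delta_1\|<1$. (3) \emph{Neumann series.} Since $(\RHinf,\|\cdot\|_{\hinf})$ is a Banach algebra and $\|\tf\Delta_1\|_{\hinf}<1$, the series $\sum_{k\ge0}(-\tf\Delta_1)^k$ is absolutely convergent, hence convergent in $\RHinf$, and its sum is a two-sided inverse of $I+\tf\Delta_1$; therefore $(I+\tf\Delta_1)^{-1}\in\RHinf$. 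Combining this with Corollary~\ref{cor:robust} finishes the proof.

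The only step with any real content is (2): making precise what ``any induced norm'' should mean and checking that $\|\cdot\|_{\hinf}$ is dominated by all of them. (If one instead reads ``induced norm'' as the operator norm of whatever Banach algebra $\|\cdot\|$ refers to --- e.g.\ the Wiener algebra for the $\ell_1\to\ell_1$ norm --- one can skip (2) entirely and run the Neumann-series argument of (3) inside that algebra, which is itself a subalgebra of $\RHinf$, so that $(I+\tf\Delta_1)^{-1}$ again lands in $\RHinf$.) Steps (1) and (3) are routine bookkeeping already essentially carried out in the proof of Corollary~\ref{cor:robust}.
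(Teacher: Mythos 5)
Your proposal is correct and takes essentially the same route as the paper: the paper's one-line proof invokes the small-gain theorem to conclude $(I+\tf\Delta_1)^{-1}\in\RHinf$ from $\|\tf\Delta_1\|<1$ and then applies Corollary~\ref{cor:robust}, and your Neumann-series argument (together with the observation that $\tf\Delta_1\in\RHinf$) is precisely the content of that theorem, just unpacked. The extra care in your step (2) about which induced norm dominates $\|\cdot\|_{\hinf}$ is not needed if, as in your parenthetical, one simply runs the Neumann series in the Banach algebra associated with whatever induced norm is given.
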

\begin{proof}
It follows from the small-gain theorem \cite{ZDGbook} that $\|\tf\Delta_1\| < 1 \implies (I+\tf\Delta_1)^{-1}\in\RHinf$, and thus Corollary~\ref{cor:robust} applies.
\end{proof}

We now specialize our results to the case where the plant $\tf P$, as defined in \eqref{eq:joint-realization}, is FIR. In this case, the modeling error arises only in the coefficient vector $g$ defining the impulse response.  To that end, we define the the estimated plant $\Gt$ with the realization $(Z, e_1, \gt^\tp)$ and note that the resulting error arises only in the $C_1$ and $C_2$ terms of the corresponding estimated plant $\Pt$, where these state-space parameters are defined as in \eqref{eq:FIR_Cs}.  To that end, we define the estimation error vector $\delta := g - \gt,$
allowing us to further specialize Corollary \ref{coro:sufficient}.
\begin{coro}\label{coro:special}
Suppose $\Thetat$ satisfies the SLS constraints for the estimated system $(Z,e_1,\gt^\tp)$. If $\|\Nt\delta^\tp\| < 1$ for any induced norm $\|\cdot\|$, then the controller $\Kt = \Lt - \Mt \Rt^{-1} \Nt$ stabilizes the true system $(Z,e_1,g^\tp)$ and achieves the closed-loop response $\Thetah$ as specified in~\eqref{eq:robust-response}. Additionally, if the induced norm $\|\cdot\|$ is either the $\mathcal{H}_\infty$ or $\mathcal{L}_1$ norm, the response $\Thetah$ simplifies to
\begin{equation}
\Thetahmat = 
\Thetatmat
+
\frac{1}{1-\delta^\tp\Nt}
\begin{bmatrix}\Nt\delta^\tp\\\Lt\delta^\tp\end{bmatrix}
\begin{bmatrix}\Rt & \Nt\end{bmatrix}\:.
\label{eq:hat-response-ez}
\end{equation}
\end{coro}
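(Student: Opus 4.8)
The plan is to get the stabilization-and-achievability claim by chaining together the results already established, and then to obtain the closed-form expression for $\Thetah$ by a rank-one matrix-inversion-lemma computation. For the first part, I would apply Lemma~\ref{lem:robust-equiv} with $\tf G' = (Z,e_1,\gt^\tp)$ playing the role of the estimated plant and $\tf G = (Z,e_1,g^\tp)$ the true plant, both viewed as instances of the generic realization~\eqref{eq:joint-realization}. The only state-space data entering the SLS constraints is $(A,B_2,C_2)=(Z,e_1,\cdot^\tp)$, so the two plants differ only in $C_2$, with $C_2 - C_2' = g^\tp - \gt^\tp = \delta^\tp$. By hypothesis $\Thetat$ satisfies the SLS constraints for $\tf G'$, which is statement $(iii)$ of Lemma~\ref{lem:robust-equiv}; the equivalence $(iii)\!\iff\!(i)$ then gives that $\Thetat$ satisfies the robust SLS constraints for the true plant $\tf G$ with $\tf\Delta_1 = -\Nt\delta^\tp$ and $\tf\Delta_2 = -\Lt\delta^\tp$. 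Now Corollary~\ref{coro:sufficient} applies verbatim: since $\|\tf\Delta_1\| = \|\Nt\delta^\tp\| < 1$ for an induced norm, $\Kt = \Lt - \Mt\Rt^{-1}\Nt$ stabilizes $(Z,e_1,g^\tp)$ and realizes the response $\Thetah$ of~\eqref{eq:robust-response} with these $\tf\Delta_1,\tf\Delta_2$. This proves everything except the simplified formula.

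For the formula, the structural point is that $\tf\Delta_1 = -\Nt\delta^\tp$ is rank one — a transfer-vector times a constant covector — so the scalar $\delta^\tp\Nt$ is the only genuinely new quantity. When $\|\cdot\|$ is the $\mathcal{H}_\infty$ or $\mathcal{L}_1$ norm, Cauchy--Schwarz (respectively Hölder/triangle inequality on the outer-product structure) yields $\|\delta^\tp\Nt\| \le \|\Nt\delta^\tp\| < 1$, so the small-gain theorem gives $(1-\delta^\tp\Nt)^{-1}\in\RHinf$ and the Sherman--Morrison identity
\[
(I+\tf\Delta_1)^{-1} \;=\; (I-\Nt\delta^\tp)^{-1} \;=\; I + \frac{\Nt\delta^\tp}{1-\delta^\tp\Nt}
\]
is a legitimate identity in $\RHinf$. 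Substituting this together with $\tf\Delta_2 = -\Lt\delta^\tp$ into~\eqref{eq:robust-response} and simplifying the four blocks — e.g.\ for $\Mh$ one combines $\Lt\delta^\tp\Rt + \Lt(\delta^\tp\Nt)\delta^\tp\Rt/(1-\delta^\tp\Nt) = \Lt\delta^\tp\Rt/(1-\delta^\tp\Nt)$, crucially using that $\delta^\tp\Nt$ is a scalar and hence commutes — collapses $\Thetah$ to the claimed rank-one update~\eqref{eq:hat-response-ez}.

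I expect the only mildly delicate point to be the bookkeeping in the reduction step: assigning $\tf G$ and $\tf G'$ in Lemma~\ref{lem:robust-equiv} so that the modeling error enters with the correct sign ($\tf\Delta_1 = -\Nt\delta^\tp$, giving the $1-\delta^\tp\Nt$ denominator rather than $1+\delta^\tp\Nt$), and pinning down that the $\mathcal{H}_\infty$/$\mathcal{L}_1$ restriction is exactly what licenses passing from $\|\Nt\delta^\tp\|<1$ to $\|\delta^\tp\Nt\|<1$, and therefore dividing by $1-\delta^\tp\Nt$ inside $\RHinf$. Everything past that is routine scalar algebra.
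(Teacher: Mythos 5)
Your proposal is correct and follows essentially the same route as the paper: invoke Lemma~\ref{lem:robust-equiv} (the $(iii)\!\iff\!(i)$ direction with the roles of $g$ and $\gt$ assigned so that $\tf\Delta_1=-\Nt\delta^\tp$), apply Corollary~\ref{coro:sufficient} for stabilization, and then use H\"older plus Sherman--Morrison on the rank-one $\tf\Delta_1$ to collapse~\eqref{eq:robust-response} to~\eqref{eq:hat-response-ez}. Your sign bookkeeping and the explicit block computation for $\Mh$ are exactly the details the paper leaves implicit.
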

\begin{proof} By Lemma~\ref{lem:robust-equiv}, $\Thetat$ satsisfies the robust SLS constraints for $(Z,e_1,g^\tp)$. The sufficient condition then follows by applying Corollary~\ref{coro:sufficient}.Furthermore, if the induced norm $\|\cdot\|$ used in Corollary \ref{cor:robust} and Corollary \ref{coro:sufficient} is either the $\mathcal{H}_\infty$ or $\mathcal{L}_1$ norm, it follows from H{\"o}lder's inequality that $\|\Nt\delta^\tp\|<1$ implies that $|\delta^\tp\Nt|<1$ on $\mathbb{D}^c$.  Hence, we can use the Sherman-Morrison identity,
\begin{equation*}
(I - xy^\T)^{-1} = I + \frac{xy^\T}{1-y^\T x}\:, 
\end{equation*}
to simplify the closed loop response \eqref{eq:robust-response} achieved by the approximate controller $\Kt = \Lt - \Mt \Rt^{-1} \Nt$ to the expression \eqref{eq:hat-response-ez}.
\end{proof}

We now use this robust parameterization to formulate a robust SLS problem that yields a controller with stability and performance guarantees.

We will use these two facts without fanfare in the following sections.
\begin{proposition}
Let $x \in \R^n,\: \tf y \in \RHinf^n$. Then
$\|x^\tp\tf y\| \leq \|x\|_2 \|\tf y\|$.
\end{proposition}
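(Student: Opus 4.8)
The plan is to reduce the claim to the Cauchy–Schwarz inequality applied frequency by frequency; equivalently, it follows from submultiplicativity of the $\hinf$-norm. I would lead with the frequency-domain argument since it is completely elementary, and mention the operator-theoretic packaging as an alternative.

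First I would recall the relevant definitions. Since $\tf y \in \RHinf^n$, it has no poles in the closed exterior of the unit disk, so the maximum modulus principle gives $\|\tf y\| = \sup_{|z| = 1} \|\tf y(z)\|_2$, where $\|\tf y(z)\|_2$ denotes the Euclidean norm of the vector $\tf y(z) \in \mathbb{C}^n$ (the largest singular value of a column vector coincides with its Euclidean norm). Similarly, $x^\tp \tf y$ is a scalar element of $\RHinf$, so $\|x^\tp \tf y\| = \sup_{|z| = 1} |x^\tp \tf y(z)|$. Now fix $z$ on the unit circle and write $x^\tp \tf y(z) = \sum_{i=1}^n x_i\, \tf y_i(z)$. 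Viewing $x$ as an element of $\mathbb{C}^n$ (with zero imaginary part), the Cauchy–Schwarz inequality gives $|x^\tp \tf y(z)| \leq \|x\|_2\, \|\tf y(z)\|_2$. Taking the supremum over $|z| = 1$ on both sides and invoking the two identities above yields $\|x^\tp \tf y\| \leq \|x\|_2 \sup_{|z|=1} \|\tf y(z)\|_2 = \|x\|_2\, \|\tf y\|$, which is the claim.

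An equivalent, slicker route: regard $x^\tp$ as a constant $1 \times n$ transfer matrix, whose $\hinf$-norm equals its spectral norm $\|x\|_2$ by the remark in the Notation paragraph, and then apply submultiplicativity of induced norms, $\|x^\tp \tf y\| \leq \|x^\tp\|\, \|\tf y\| = \|x\|_2\, \|\tf y\|$. There is essentially no obstacle here; the only points requiring care are bookkeeping the definition of $\|\tf y\|$ for a column-vector-valued transfer function and noting that $x$ being real does not obstruct the Cauchy–Schwarz step.
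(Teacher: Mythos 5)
Your proof is correct, and the paper in fact states this proposition without any proof (it is one of the two facts the authors say they will ``use without fanfare''), so your pointwise Cauchy--Schwarz argument --- equivalently, submultiplicativity of the induced norm with $\|x^\tp\| = \|x\|_2$ --- is exactly the elementary justification the paper leaves implicit. No gaps.
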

% \begin{proof}
% \begin{align*}
% \|x^\tp\tf y\| = \sup_{\|z\|_{\ell_2} = 1}\|x^\tp\tf yz\|_{\ell_2}
% \leq &\;\sup_{\|z\|_{\ell_2} = 1}\sqrt{\sum_k (x^\tp(\tf y \tf z)_k)^2}\\
%  \leq &\; \|x\|_2 \sup_{\|z\|_{\ell_2} = 1}\sqrt{\sum_k \|(\tf y \tf z)_k\|_2^2}\\
%   = &\; \|x\|_2\|\tf y\|\:.
% \end{align*}
% \end{proof}
\begin{proposition}
 $\left\| (I+\tf A)^{-1}\right\| \leq (1-\|\tf A\|)^{-1}$ for all $\|\tf A\|<1$.
\end{proposition}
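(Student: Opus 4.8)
The plan is to invoke the Neumann series expansion. Since $\|\tf A\|<1$ and $\RHinf$ is a Banach algebra under the $\hinf$-norm (which is submultiplicative, and for which it specializes to the spectral norm in the constant-matrix case), I would first show that the partial sums $\tf S_n := \sum_{k=0}^n (-\tf A)^k$ form a Cauchy sequence: for $n>m$ we have $\|\tf S_n - \tf S_m\| \le \sum_{k=m+1}^n \|\tf A\|^k$, which tends to $0$ as $m \to \infty$ because $\|\tf A\|<1$. Completeness then yields a limit $\tf S \in \RHinf$.

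Next I would identify $\tf S$ with the inverse. Using the telescoping identity $(I+\tf A)\tf S_n = \tf S_n(I+\tf A) = I - (-\tf A)^{n+1}$ together with $\|(-\tf A)^{n+1}\| \le \|\tf A\|^{n+1} \to 0$, passing to the limit gives $(I+\tf A)\tf S = \tf S(I+\tf A) = I$, so $\tf S = (I+\tf A)^{-1}$.

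Finally, the bound follows from continuity of the norm, the triangle inequality, and submultiplicativity: $\|(I+\tf A)^{-1}\| = \|\tf S\| = \lim_n \|\tf S_n\| \le \sum_{k=0}^\infty \|\tf A\|^k = (1-\|\tf A\|)^{-1}$.

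The only real obstacle is a bookkeeping one, namely making precise that the Neumann series converges in $\RHinf$ and genuinely represents the inverse there; once the Banach-algebra structure of $\RHinf$ (and the submultiplicativity of the $\hinf$-norm) is taken as given, everything else is routine, and the identical argument covers the constant-matrix/spectral-norm case as a special instance.
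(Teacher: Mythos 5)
The paper offers no proof of this proposition at all --- it is listed as one of two facts to be ``used without fanfare,'' so there is nothing to match your argument against. Your Neumann-series proof is the standard one and is essentially correct: the partial sums are Cauchy, the telescoping identity identifies the limit as the inverse, and the geometric series gives the bound $(1-\|\tf A\|)^{-1}$. The one wrinkle is your appeal to completeness of $\RHinf$: the set of \emph{real-rational} stable proper transfer matrices is not complete under the $\hinf$-norm (a norm-limit of rational functions need not be rational), so the series converges only in the ambient Banach algebra $\mathcal{H}_\infty$. This does not break the proof --- since $\tf A$ is real-rational, $(I+\tf A)^{-1}$ is real-rational whenever it exists, and the Neumann argument shows it is analytic and bounded outside the unit disk, hence lies in $\RHinf$ with the claimed norm bound --- but the membership in $\RHinf$ should be recovered from rationality of the inverse rather than from completeness of $\RHinf$ itself. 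With that small repair the argument is complete, and as you note it covers the constant-matrix/spectral-norm case verbatim.
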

% \begin{proof}
% \begin{align*}
% \sigma_1((I+\tf A(z))^{-1}) = &\; (\sigma_n(I+\tf A(z))^{-1}\\
% \leq &\;(1-\sigma_1(\tf A(z))^{-1}\:,
% \end{align*}
% by Weyl's inequality. Now take the supremum over $\|z\|=1$ on both sides.
% \end{proof}

Define $J(g,\tf\Theta)$ to be the performance (i.e. the objective in~\eqref{eq:SLS_optimal_problem}) of the controller $\tf K = \tf L-\tf M \tf R^{-1}\tf N$ induced by $\tf\Theta = \{\tf R, \tf M, \tf N, \tf L\}$ when placed in closed-loop with the FIR plant $G$ specified by impulse response coefficients $g$. Now, assume we design a response $\Thetat$, with corresponding controller $\Kt$,  that satisfies the SLS constraints specified by the estimate system $\gt$. We saw in the previous section that under suitable conditions, the response on the true system $g$ is given by $\Thetah$, as specified in Corollary \ref{coro:special}. By the triangle inequality, Corollary~\ref{coro:special}, and our parametric assumption~\eqref{eq:FIR_Cs}-\eqref{eq:FIR_D12}, we can then bound the difference between expectation $J(\gt,\Thetat)$ and reality $J(g,\Thetah)$ as follows:
\begin{align*}
J(g,&\;\Thetah) 
=  \left\| \begin{bmatrix} C_1 & D_{12} \end{bmatrix} \Thetahmat \begin{bmatrix} B_1 \\ D_{21} \end{bmatrix} + D_{11} \right\|\\ 
= &\; \left\| \begin{bmatrix} g^\tp&0\\0&1\end{bmatrix}\left(\Thetatmat +
\begin{bmatrix}\frac{\Nt\delta^\tp}{1-\delta^\tp\Nt}\\\frac{\Lt\delta^\tp}{1-\delta^\tp\Nt}\end{bmatrix}
\begin{bmatrix}\Rt & \Nt\end{bmatrix}\right)  \begin{bmatrix} B_1 \\ D_{21} \end{bmatrix} + D_{11} \right\|\\
= &\; 
\left\|\left(\left(\begin{bmatrix} \gt^\tp&0\\0&1\end{bmatrix}+\begin{bmatrix} \delta^\tp&0\\0&0\end{bmatrix} \right)\Thetatmat + \begin{bmatrix} g^\tp&0\\0&1\end{bmatrix}
\begin{bmatrix}\frac{\Nt\delta^\tp}{1-\delta^\tp\Nt}\\\frac{\Lt\delta^\tp}{1-\delta^\tp\Nt}\end{bmatrix}
\begin{bmatrix}\Rt & \Nt\end{bmatrix}\right)  \begin{bmatrix} B_1 \\ D_{21} \end{bmatrix} + D_{11} \right\|
\\
\leq &\; J(\gt,\Thetat) + \left\| \left(
\begin{bmatrix}
\delta^\tp \\0
\end{bmatrix}
+ \begin{bmatrix} g^\tp&0\\0&1\end{bmatrix}\begin{bmatrix}\frac{\Nt\delta^\tp}{1-\delta^\tp\Nt}\\\frac{\Lt\delta^\tp}{1-\delta^\tp\Nt}\end{bmatrix}
\right)\begin{bmatrix}\Rt & \Nt\end{bmatrix}\begin{bmatrix} B_1 \\ D_{21} \end{bmatrix} \right\| \\
\leq &\; J(\gt,\Thetat) + \frac{1}{1-\|\delta^\tp\Nt\|} \left\|
\begin{bmatrix}\delta^\tp + \gt^\tp\Nt\delta^\tp\\\Lt\delta^\tp\end{bmatrix}
\right\| \|\Rt B_1 + \Nt D_{21} \|\\
\leq &\; J(\gt,\Thetat) +   \frac{\|\delta\|}{1-\|\delta^\tp\Nt\|} \left\|
\begin{bmatrix} 1+ \gt^\tp\Nt\\\Lt\end{bmatrix}
\right\| \|\Rt B_1 + \Nt D_{21} \|\:,
\end{align*}
where we assume $\|\Nt\delta^\tp\|<1$ for the bound to be valid.

For any estimated response $\gt$ satisfying $\|\delta\|\leq \epsilon$, it then follows that
\begin{equation}
J(g,\Thetah) \leq J(\gt,\Thetat) + \epsilon\alpha\|\Rt B_1 + \Nt D_{21} \|\label{eq:robust-objective}
\end{equation}
for any $\alpha$ satisfying
\begin{equation}
\epsilon\alpha\|\Nt\| + \left\|
\begin{bmatrix} 1+ \gt^\tp\Nt\\\Lt\end{bmatrix}
\right\| \leq \alpha\:, \label{eq:robust-constraint}
\end{equation}
noting that $\epsilon\|\Nt\|<1$, which implies $\|\Nt\delta^\tp\|<1$, is equivalent to $\alpha > 0$. We denote the right-hand side of this bound as 
\begin{equation*}
Q(\gt,\Thetat,\alpha) := J(\gt,\Thetat)+  \epsilon\alpha\|\Rt B_1 + \Nt D_{21} \| \:.
\end{equation*}
The bound~\eqref{eq:robust-objective} then suggests the following robust controller synthesis procedure, which balances between solving for the optimal controller for the approximate system $\gt$ and controlling a perturbative term. We call this problem the \emph{robust SLS problem for $\gt$}.
\begin{empheq}[box=\widefbox]{align}
\underset{\substack{\{\Rt,\Mt,\Nt,\Lt\}\\ \alpha>0}}{\minimize} &\;\; Q(\gt,\Thetat,\alpha) \label{eq:robust_synthesis}\\
\st &\;\; \text{$\Thetat$ satisfies SLS constraints}\notag\\
&\;\text{\eqref{eq:main_1} - \eqref{eq:main_stable} for $(Z,e_1,\gt^\tp)$} \notag\\
&\; \epsilon\alpha\|\Nt\| + \left\|
\begin{bmatrix} 1+ \gt^\tp\Nt\\\Lt\end{bmatrix}
\right\| \leq \alpha\:.\notag
\end{empheq}
Although this problem is not jointly convex in $\alpha$ and the system responses $\Thetat$, one can use a golden section search on $\alpha$ in practice. Moreover, the sum of norms can be split into two norm constraints using an epigraph formulation (see~\cite{boyd2004convex}, Ch. 3).

\subsection{Sub-optimality guarantees for robust SLS}

We now show a bound on the change in the optimal control cost when the controller is synthesized using the robust SLS problem \eqref{eq:robust_synthesis}.

\begin{proposition}\label{prop:optimality}
Let $\tf \Theta_0$ and $(\Thetat_*,\alpha_*)$, as well as $\Thetah_*$, be defined as in Theorem~\eqref{thm:sc}, and let $\|\delta\| \leq \epsilon$. If $\epsilon < (2\|\No\|)^{-1}$, we  have that
\begin{align}
J(g,&\Thetah_*) - J(g,\tf \Theta_0) \leq  \label{eq:final-bound} \\
&\;\frac{2\epsilon}{1-2\epsilon\|\No\|}\bignorm{\begin{bmatrix}1+g^\tp\No\\\Lo\end{bmatrix}} \|\Ro B_1 + \No D_{21}\|\:. \notag
\end{align}
\end{proposition}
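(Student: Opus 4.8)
The plan is to construct an explicit feasible point $(\Thetat_0,\alpha_0)$ for the robust SLS problem~\eqref{eq:robust_synthesis} out of the optimal true‑system response $\tf\Theta_0$, to upper bound its objective in terms of $J(g,\tf\Theta_0)$, and to conclude via the chain
\begin{equation*}
J(g,\Thetah_*)\ \le\ Q(\gt,\Thetat_*,\alpha_*)\ \le\ Q(\gt,\Thetat_0,\alpha_0)\ \le\ J(g,\tf\Theta_0)+\big(\text{the bound in~\eqref{eq:final-bound}}\big).
\end{equation*}
The middle step is just optimality of $(\Thetat_*,\alpha_*)$ for~\eqref{eq:robust_synthesis}. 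The first step is~\eqref{eq:robust-objective} evaluated at $(\Thetat_*,\alpha_*)$, which is legitimate because feasibility for~\eqref{eq:robust_synthesis} with $\alpha_*>0$ forces $\epsilon\|\Nt_*\|<1$ (as observed below~\eqref{eq:robust-constraint}), hence $\|\Nt_*\delta^\tp\|\le\|\delta\|\,\|\Nt_*\|<1$, so $\Thetah_*$ is exactly the closed‑loop response furnished by Corollary~\ref{coro:special}.

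To build the feasible point I would invoke Lemma~\ref{lem:robust-equiv} with $\Thetat:=\tf\Theta_0$, $\tf G':=(Z,e_1,g^\tp)$, $\tf G:=(Z,e_1,\gt^\tp)$, so that $\tf\Delta_1=\No\delta^\tp$ and $\tf\Delta_2=\Lo\delta^\tp$ with $\delta=g-\gt$. Since $\|\No\delta^\tp\|\le\epsilon\|\No\|<\tfrac12$, the small‑gain theorem gives $(I+\No\delta^\tp)^{-1}\in\RHinf$; then, because $\tf\Theta_0$ satisfies the SLS constraints for $\tf G'$ (statement $(\mathrm{iii})$), the lemma's equivalence $(\mathrm{iii})\Leftrightarrow(\mathrm{i})$ and implication $(\mathrm{i})\Rightarrow(\mathrm{ii})$ say that the response $\Thetat_0$ obtained from $\tf\Theta_0$ via~\eqref{eq:robust-response} satisfies the SLS constraints for $(Z,e_1,\gt^\tp)$. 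A Sherman--Morrison computation (using $\gt=g-\delta$) puts its relevant ingredients into the form $\Nt_0=\No/(1+\delta^\tp\No)$, $\Lt_0=\Lo/(1+\delta^\tp\No)$, $\Rt_0 B_1+\Nt_0 D_{21}=(I+\No\delta^\tp)^{-1}(\Ro B_1+\No D_{21})$, and $\begin{bmatrix}1+\gt^\tp\Nt_0\\\Lt_0\end{bmatrix}=(1+\delta^\tp\No)^{-1}\begin{bmatrix}1+g^\tp\No\\\Lo\end{bmatrix}$. I then take $\alpha_0:=\bignorm{\begin{bmatrix}1+\gt^\tp\Nt_0\\\Lt_0\end{bmatrix}}/(1-\epsilon\|\Nt_0\|)$, the value that makes constraint~\eqref{eq:robust-constraint} tight; it is a positive real precisely because $\epsilon<(2\|\No\|)^{-1}$ forces $\epsilon\|\Nt_0\|\le\epsilon\|\No\|/(1-\epsilon\|\No\|)<1$, so $(\Thetat_0,\alpha_0)$ is feasible.

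It remains to estimate $Q(\gt,\Thetat_0,\alpha_0)=J(\gt,\Thetat_0)+\epsilon\alpha_0\|\Rt_0 B_1+\Nt_0 D_{21}\|$. Applying the computation that leads to~\eqref{eq:robust-objective} with the roles of $g$ and $\gt$ interchanged --- so that $\tf\Theta_0$ is the nominal design and $\Thetat_0$ is its realization on $\gt$ --- bounds the first summand by $J(\gt,\Thetat_0)\le J(g,\tf\Theta_0)+\frac{\|\delta\|}{1-\|\delta^\tp\No\|}\bignorm{\begin{bmatrix}1+g^\tp\No\\\Lo\end{bmatrix}}\|\Ro B_1+\No D_{21}\|$, which is at most $J(g,\tf\Theta_0)+\frac{\epsilon}{1-\epsilon\|\No\|}\bignorm{\begin{bmatrix}1+g^\tp\No\\\Lo\end{bmatrix}}\|\Ro B_1+\No D_{21}\|$ via $\|x^\tp\tf y\|\le\|x\|_2\|\tf y\|$. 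For the second summand, the same inequality and $\|(I+\tf A)^{-1}\|\le(1-\|\tf A\|)^{-1}$ applied to the Sherman--Morrison forms give $\alpha_0\le\bignorm{\begin{bmatrix}1+g^\tp\No\\\Lo\end{bmatrix}}/(1-2\epsilon\|\No\|)$ and $\|\Rt_0 B_1+\Nt_0 D_{21}\|\le\|\Ro B_1+\No D_{21}\|/(1-\epsilon\|\No\|)$. Adding the two contributions and simplifying with the identity $\frac{\epsilon}{1-\epsilon\|\No\|}+\frac{\epsilon}{(1-2\epsilon\|\No\|)(1-\epsilon\|\No\|)}=\frac{2\epsilon}{1-2\epsilon\|\No\|}$ collapses the estimate to exactly~\eqref{eq:final-bound}. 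The one genuine obstacle here is bookkeeping: confirming that the interchanged computation really identifies $\Thetat_0$ as the realization of $\tf\Theta_0$'s controller on $\gt$, and threading the Sherman--Morrison rewrites so that the constants line up as stated; everything else is the three‑inequality chain above together with Lemma~\ref{lem:robust-equiv} and Corollary~\ref{coro:special}.
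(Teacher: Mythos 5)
Your proposal is correct and follows essentially the same route as the paper: construct the realization of $\Ko$ on $\gt$ via Lemma~\ref{lem:robust-equiv} as a feasible point for~\eqref{eq:robust_synthesis}, chain $J(g,\Thetah_*)\le Q(\gt,\Thetat_*,\alpha_*)\le Q(\gt,\Thetat_0,\alpha_0)$, bound $J(\gt,\Thetat_0)$ by the role-reversed perturbation argument, and collapse the constants. The only cosmetic difference is that you take $\alpha_0$ to be the tight value of constraint~\eqref{eq:robust-constraint} and then bound it by $\bignorm{\begin{bmatrix}1+g^\tp\No\\\Lo\end{bmatrix}}/(1-2\epsilon\|\No\|)$, whereas the paper's Lemma~\ref{lem:feasibility} uses that latter quantity directly; the resulting algebra and final bound are identical.
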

To prove this proposition, we require a technical lemma that ensures that the true controller $\Ko$ stabilizes the estimate system specified by the FIR coefficients $\gt$, i.e. that the optimal system response $\tf \Theta_0$ can be used to construct a feasible solution to the approximate SLS synthesis problem \eqref{eq:robust_synthesis}.
\begin{lemma}\label{lem:feasibility}
Let $\tf \Theta_0$ and its induced controller $\Ko$ be as defined in Theorem~\ref{thm:sc}, and let $||\delta||\leq \epsilon$, with $\epsilon < (2\|\No\|)^{-1}$. Then
\begin{equation}
\alpha_0 := \frac{1}{1-2\epsilon\|\No\|}\left\|\begin{bmatrix} 1+ g^\tp \No\\ \Lo\end{bmatrix}\right\|\:
\end{equation}
is strictly positive, and the controller $\Ko$ is stabilizing for the estimate system specified by $(Z,e_1,\gt^\tp)$ and achieves the system response $\Thetah_0$ defined by
\begin{equation}
\begin{array}{rcl}
\Rh_0 &=& (I+\No\delta^\tp)^{-1}\Ro \\
\Mh_0 &=& \Mo - \Lo\delta^\tp(I+\No\delta^\tp)^{-1}\Ro \\
\Nh_0 &=& (I+\No\delta^\tp)^{-1}\No \\
\Lh_0 &=& \Lo - \Lo\delta^\tp(I+\No\delta^\tp)^{-1}\No.
\end{array}
\label{eq:hats}
\end{equation}
Furthermore, $(\Thetah_0, \alpha_0)$ are feasible solutions to the approximate SLS synthesis problem~\eqref{eq:robust_synthesis}.
\end{lemma}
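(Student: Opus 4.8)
The plan is to reverse the viewpoint of Corollary~\ref{coro:special}: there one designs a response for the estimate $\gt$ and certifies it on the true plant, whereas here $\tf \Theta_0$ is a response for the \emph{true} plant $(Z,e_1,g^\tp)$ and we want to re-interpret it --- through the perturbation map~\eqref{eq:robust-response} --- as a feasible point of the robust synthesis problem~\eqref{eq:robust_synthesis} written for the \emph{estimate} $(Z,e_1,\gt^\tp)$. First I would record the elementary estimates that make the construction well-posed: since $\delta$ is a constant vector and $\No\in\frac1z\RHinf$, the rank-one norm bounds give $\|\No\delta^\tp\|\le\|\No\|\,\|\delta\|\le\epsilon\|\No\|$ and $\|\delta^\tp\No\|\le\epsilon\|\No\|$, both strictly below $\tfrac12$ by the hypothesis $\epsilon<(2\|\No\|)^{-1}$; in particular $1-2\epsilon\|\No\|>0$. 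Moreover the $z^0$-coefficient of $1+g^\tp\No$ equals $1$, so $\|1+g^\tp\No\|\ge 1$ and hence $\bignorm{\begin{bmatrix}1+g^\tp\No\\\Lo\end{bmatrix}}\ge 1$, giving $\alpha_0>0$.

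Next I would invoke Lemma~\ref{lem:robust-equiv} with $\tf G=(Z,e_1,\gt^\tp)$ and $\tf G'=(Z,e_1,g^\tp)$, so that $C-C'=\gt^\tp-g^\tp=-\delta^\tp$. Because $\tf \Theta_0$ satisfies the SLS constraints for $\tf G'$, the equivalence $(i)\iff(iii)$ of that lemma shows that $\tf \Theta_0$ satisfies the \emph{robust} SLS constraints for $(Z,e_1,\gt^\tp)$ with
\[ \tf\Delta_1=-\No(C-C')=\No\delta^\tp, \qquad \tf\Delta_2=\Lo\delta^\tp. \]
By the first paragraph $\|\tf\Delta_1\|<1$, so $(I+\tf\Delta_1)^{-1}\in\RHinf$, and all standing assumptions of Lemma~\ref{lem:robust-equiv} hold. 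Then $(i)\implies(ii)$ shows that the response produced by~\eqref{eq:robust-response} from this $(\tf\Delta_1,\tf\Delta_2)$ --- which is precisely $\Thetah_0$ as written in~\eqref{eq:hats} --- satisfies the SLS constraints for $(Z,e_1,\gt^\tp)$, discharging the first constraint of~\eqref{eq:robust_synthesis}; and Corollary~\ref{cor:robust} gives that $\Ko=\Lo-\Mo\Ro^{-1}\No$ stabilizes $(Z,e_1,\gt^\tp)$ and realizes the closed-loop response $\Thetah_0$.

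It remains to verify the constraint $\epsilon\alpha\|\Nt\|+\bignorm{\begin{bmatrix}1+\gt^\tp\Nt\\\Lt\end{bmatrix}}\le\alpha$ at $(\Thetah_0,\alpha_0)$. I would first simplify $\Thetah_0$: exactly as in the proof of Corollary~\ref{coro:special}, $\|\No\delta^\tp\|<1$ forces $|\delta^\tp\No|<1$ on $\mathbb{D}^c$, so Sherman--Morrison gives $(I+\No\delta^\tp)^{-1}=I-\frac{\No\delta^\tp}{1+\delta^\tp\No}$, and substituting into~\eqref{eq:hats} (using that $\delta^\tp\No$ is scalar and $\gt^\tp=g^\tp-\delta^\tp$) yields
\[ \Nh_0=\frac{\No}{1+\delta^\tp\No},\qquad \Lh_0=\frac{\Lo}{1+\delta^\tp\No},\qquad 1+\gt^\tp\Nh_0=\frac{1+g^\tp\No}{1+\delta^\tp\No}. \]
The scalar bound $\|(1+\delta^\tp\No)^{-1}\|\le(1-\epsilon\|\No\|)^{-1}$ then gives $\|\Nh_0\|\le\frac{\|\No\|}{1-\epsilon\|\No\|}$ and $\bignorm{\begin{bmatrix}1+\gt^\tp\Nh_0\\\Lh_0\end{bmatrix}}\le\frac{1}{1-\epsilon\|\No\|}\bignorm{\begin{bmatrix}1+g^\tp\No\\\Lo\end{bmatrix}}$, and plugging these in together with $\bignorm{\begin{bmatrix}1+g^\tp\No\\\Lo\end{bmatrix}}=(1-2\epsilon\|\No\|)\alpha_0$ collapses the left-hand side to $\frac{\epsilon\|\No\|\alpha_0+(1-2\epsilon\|\No\|)\alpha_0}{1-\epsilon\|\No\|}=\alpha_0$, so the constraint holds and $(\Thetah_0,\alpha_0)$ is feasible for~\eqref{eq:robust_synthesis}. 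The only genuinely delicate point is the value of $\alpha_0$ itself: it is reverse-engineered precisely so that this chain of norm bounds telescopes back to $\alpha_0$ (i.e.\ so that $\Ko$ is feasible ``to first order in $\epsilon$''); everything else is a routine invocation of Lemma~\ref{lem:robust-equiv}, Corollary~\ref{cor:robust}, the Sherman--Morrison/H\"older step already used for Corollary~\ref{coro:special}, and the two norm propositions stated above.
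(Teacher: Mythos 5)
Your proof is correct and follows essentially the same route as the paper's: the same invocation of Lemma~\ref{lem:robust-equiv} with the roles of $g$ and $\gt$ swapped (so $\tf\Delta_1=\No\delta^\tp$, $\tf\Delta_2=\Lo\delta^\tp$), Corollary~\ref{cor:robust} for stabilization, and the same Sherman--Morrison simplification with the two norm bounds telescoping to $\alpha_0$. The only differences are cosmetic (you justify $\alpha_0>0$ and the invertibility of $I+\No\delta^\tp$ up front rather than at the end).
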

\begin{proof}
Both of these points are conditional on $(I+\No\delta^\tp)^{-1}$ existing in $\RHinf$:
\begin{itemize}
    \item By assumption, $\tf\Theta_0$ satisfies the SLS constraints for $(Z,e_1,g^\tp)$; by Lemma~\ref{lem:robust-equiv}, it equivalently satisfies the robust SLS constraints for $(Z,e_1,\gt^\tp)$, where $\tf\Delta_1 = \No\delta^\tp$ and $\tf\Delta_2 = L_0 \delta^\tp$ (note the switched roles of $g$ and $\gt$). By Corollary~\ref{cor:robust}, $\Ko$ then stabilizes $(Z,e_1,\gt^\tp)$ and achieves the system response $\Thetah_0$.
    \item By Lemma~\ref{lem:robust-equiv}, $\Thetah_0$ satisfies the SLS constraints for $(Z,e_1,\gt^\tp)$, and is thus part of a feasible point for the approximate SLS synthesis problem~\eqref{eq:robust_synthesis}. Now, we need to check that the corresponding $\alpha_0$ is also part of a feasible solution. Toward that end, by the Sherman-Morrison identity, we see that 
    \begin{equation*}
     \|\epsilon\Nh_0\| = \; \| \epsilon (I-\frac{\No\delta^\tp}{1+\delta^\tp\No})\No\|
    \leq   \epsilon\|\No\| + \frac{(\epsilon\|\No\|)^2}{1-\epsilon\|\No\|} = \frac{\epsilon \|\No\|}{1-\epsilon\|\No\|}.
    \end{equation*}
    Furthermore,
    \begin{equation*}
    \left\|\begin{bmatrix} 1+ \gt^\tp \Nh_0\\ \Lh_0\end{bmatrix} \right\| =\; \left\|\begin{bmatrix} \frac{1+g^\tp\No}{1+\delta^\tp\No}\\ \frac{\Lo}{1+\delta^\tp\No}\end{bmatrix} \right\| \leq  \frac{1}{1-\epsilon\|\No\|}\left\|\begin{bmatrix} 1+ g^\tp \No\\ \Lo\end{bmatrix}\right\|.
    \end{equation*}
    Therefore,
    \begin{align*}
    \frac{1}{1-\|\epsilon\Nh_0\|}\left\|\begin{bmatrix} 1+ \gt^\tp \Nh_0\\ \Lh_0\end{bmatrix} \right\| \leq \alpha_0\:,
    \end{align*}
    the final feasibility condition of~\eqref{eq:robust_synthesis}.
\end{itemize}
It therefore remains to verify that $(I+\No\delta^\tp)^{-1}$ exists and is in $\RHinf$. As we have seen, a sufficient condition is $\|\No\delta^\tp\| < 1$, and this condition is implied by the assumption that $\alpha_0 > 0$.
\end{proof}
\begin{proof}[Proof of Proposition \ref{prop:optimality}]
We immediately invoke Lemma~\ref{lem:feasibility} by noting that our assumption on $\epsilon$ ensures $\alpha_0>0$, and we are assured that $(\Thetah_0,\alpha_0)$ is a feasible point for the approximate SLS synthesis problem~\eqref{eq:robust_synthesis}. From inequality~\eqref{eq:robust-objective}, we then have that
\begin{align}
J(g,\Thetah_*) \leq \; Q(\gt,\Thetat_*,\alpha_*)\notag
\leq &\; Q(\gt,\Thetah_0,\alpha_0) \notag\\
= &\; J(\gt,\Thetah_0) + \epsilon \alpha_0 \|\Rh_0 B_1 + \Nh_0 D_{21} \| \notag\\
\leq&\; J(\gt,\Thetah_0) + \epsilon \alpha_0\frac{\|\Ro B_1 + \No D_{21}\|}{1-\epsilon\|\No\|}\;, \label{eq:bound1}
\end{align}
where the second inequality follows from the optimality of $(\Thetat_*, \alpha_*)$, and the final inequality from the definitions of $\Rh_0$ and $\Nh_0$.
Now, we repeat the argument used to derive~\eqref{eq:robust-objective} with expectation and reality reversed: this time we assume our design expectation was $J(g,\tf\Theta_0)$ but our reality is $J(\gt,\Thetah_0)$. This is a valid analogy as $\tf\Theta_0$ satisfies the SLS equations for $(Z,e_1,g^\tp)$.  With the true and estimated parameters reversed, we can thus bound $J(\gt,\Thetah_0)$ by
\begin{equation}
J(\gt,\Thetah_0) \leq\; J(g,\tf \Theta_0) + \frac{\epsilon}{1-\epsilon\|\No\|}\bignorm{\begin{bmatrix}1+g^\tp\No\\\Lo\end{bmatrix}}
\|\Ro B_1 + \No D_{21}\| \:. \label{eq:bound2}
\end{equation}
Finally, combining bounds \eqref{eq:bound1} and \eqref{eq:bound2} and plugging in $\alpha_0$ gives
\begin{align*}
J(g,\Thetah^*) - J(g,\tf \Theta_0) \leq &\; \left(\alpha_0 + \bignorm{\begin{bmatrix}1+g^\tp\No\\\Lo\end{bmatrix}}\right)\frac{\epsilon\|\Ro B_1 + \No D_{21}\|}{1-\epsilon\|\No\|} \\
= &\; \frac{2\epsilon}{1-2\epsilon\|\No\|}\bignorm{\begin{bmatrix}1+g^\tp\No\\\Lo\end{bmatrix}} \|\Ro B_1 + \No D_{21}\|\:.
\end{align*}
\end{proof}
\subsection{Coarse-grained ID and the proof of Theorem~\ref{thm:sc}}
First, to prove the sample complexity of synthesizing a stabilizing controller based on an approximate system, we require an intermediary lemma 
on how well coarse-grained identification can identify the true system. The proof of the lemma (and the related change necessary for Corollary~\ref{cor:sc}) is deferred to the Appendix.
\begin{restatable}{lemma}{coarseID}\label{lem:coarseID}
Assume we estimate the system $g$ by a length-$r$ FIR system $\tilde g$ using coarse-grained identification (output noise only) on $m$ experiments, where the inputs $\tf u_i$ are constrained to lie in a unit $\ell_p^T$ ball. Then, with probability at least $1-\eta$, 
\begin{equation*}
||\delta||_2 \leq 2 \sqrt{\log 2\frac{\sigma^2r^{2/\max(p,2)}}{m}}\left(1+\sqrt{2\log\eta^{-1}}\right)\:.
\end{equation*}
\end{restatable}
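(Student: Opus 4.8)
The plan is to reduce the whole statement to a Gaussian concentration bound on the least-squares residual. First I would vectorize the identification problem: stacking the $m$ experiments, the collected data satisfy $\tf y = \mathcal U g + \xi$, where $\mathcal U$ is the tall matrix whose blocks are the lower-triangular Toeplitz convolution operators built from the inputs $\tf u_i$ (truncated to the observation horizon $T$), and $\xi \sim \mathcal N(0,\sigma^2 I)$ collects the output noise. The coarse-grained identification estimate is the least-squares solution $\tilde g = (\mathcal U^\tp\mathcal U)^{-1}\mathcal U^\tp \tf y$, so that the estimation error is the pure-noise term
\[
\delta \;=\; \tilde g - g \;=\; (\mathcal U^\tp\mathcal U)^{-1}\mathcal U^\tp \xi \;\sim\; \mathcal N\!\big(0,\ \sigma^2(\mathcal U^\tp\mathcal U)^{-1}\big).
\]
From here everything hinges on controlling the two scalar summaries $\mathrm{tr}\big((\mathcal U^\tp\mathcal U)^{-1}\big)$ and $\big\|(\mathcal U^\tp\mathcal U)^{-1}\big\|$ of the design matrix arising from the input design prescribed by coarse-grained identification.

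Second, I would import, with the minor modifications already flagged in Section~\ref{ssec:coarse-id}, the optimal input-design analysis of Tu et al.~\cite{tu17}: over inputs with $\|\tf u_i\|_p \le 1$, one can choose the experiments so that $\mathcal U^\tp\mathcal U$ is (essentially) a scalar multiple of the identity with
\[
\mathrm{tr}\big((\mathcal U^\tp\mathcal U)^{-1}\big) \;\le\; 4\log 2\,\frac{r^{2/\max(p,2)}}{m},
\qquad
\big\|(\mathcal U^\tp\mathcal U)^{-1}\big\| \;\le\; 4\log 2\,\frac{r^{2/\max(p,2)}}{m},
\]
the second inequality being the crude-but-sufficient consequence $\|\cdot\|\le\mathrm{tr}(\cdot)$ of the first. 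The exponent $2/\max(p,2)$ is exactly the $\ell_2$-energy budget that a unit $\ell_p^T$ ball affords: for $p\le 2$ one is confined to unit $\ell_2$ energy (impulse-like inputs already suffice), whereas for $p>2$ one may use full-swing excitations carrying $\ell_2$ energy up to order $r^{1-2/p}$, which a shifted/pseudorandom flat design spreads evenly over the $r-1$ impulse-response coefficients. This is the ingredient I would take essentially verbatim, since the minimax-optimal design is the technical heart of~\cite{tu17}.

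Third, I would convert these covariance bounds into the stated high-probability guarantee via a standard two-term split. Writing $\delta = \sigma A z$ with $z\sim\mathcal N(0,I)$ and $A=(\mathcal U^\tp\mathcal U)^{-1}\mathcal U^\tp$, the map $z\mapsto\|\delta\|_2$ is Lipschitz with constant $\sigma\|A\| = \sigma\|(\mathcal U^\tp\mathcal U)^{-1}\|^{1/2}$ (since $\|A\|^2 = \|AA^\tp\| = \|(\mathcal U^\tp\mathcal U)^{-1}\|$), so the Gaussian concentration (Borell--TIS) inequality gives, with probability at least $1-\eta$,
\[
\|\delta\|_2 \;\le\; \mathbb E\|\delta\|_2 \;+\; \sigma\big\|(\mathcal U^\tp\mathcal U)^{-1}\big\|^{1/2}\sqrt{2\log\eta^{-1}},
\]
while Jensen's inequality yields $\mathbb E\|\delta\|_2 \le \sqrt{\mathbb E\|\delta\|_2^2} = \sigma\sqrt{\mathrm{tr}\big((\mathcal U^\tp\mathcal U)^{-1}\big)}$. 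Plugging in the two design bounds and factoring out $\sigma\sqrt{4\log 2\cdot r^{2/\max(p,2)}/m} = 2\sqrt{\log 2}\,\sigma\sqrt{r^{2/\max(p,2)}/m}$ produces exactly the claimed inequality.

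The step I expect to be the main obstacle — or at least the one demanding the most care — is the second. The bounds in~\cite{tu17} are phrased for the $\mathcal H_\infty$ estimation error, so transferring to an $\ell_2$ bound on the impulse-response coefficients (which lets us drop the covering-number overhead that the supremum over the unit circle incurs), while simultaneously tracking both $\mathrm{tr}\big((\mathcal U^\tp\mathcal U)^{-1}\big)$ and $\big\|(\mathcal U^\tp\mathcal U)^{-1}\big\|$ and handling the finite-horizon truncation of the convolution, is where the real work lies; the concentration arguments are then routine. A secondary point to verify is identifiability of the chosen design, i.e. that $\mathcal U^\tp\mathcal U$ is invertible, so that $\tilde g$ is well defined in the first place.
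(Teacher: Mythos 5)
Your proposal is correct and follows essentially the same route as the paper's own proof: write $\delta$ as a Gaussian vector with covariance $\sigma^2(\mathcal U^\tp\mathcal U)^{-1}$, apply Lipschitz Gaussian concentration plus Jensen to get $\|\delta\|_2 \le \sigma\bigl(\|S\|^{1/2}\sqrt{2\log\eta^{-1}}+\sqrt{\Tr(S)}\bigr)$, bound $\|S\|\le\Tr(S)$, and import the input-design trace bounds from Tu et al.\ (the paper likewise cites these directly as a lemma, after restricting to the upper-left $r\times r$ block of the inverse). The only cosmetic difference is that the paper keeps the exact value $\Tr(S)=r/m$ for $p\in[1,2]$ as a sharper special case before folding everything into the unified $4\log 2\, r^{2/\max(p,2)}/m$ bound.
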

Taking $m$ large enough such that $\|\delta\|_2 = \epsilon < (4\|\No\|)^{-1}$ (implied by taking $m\gtrsim \sigma^2r\|\No\|^2\sqrt{\log(\eta^{-1})}$), we have 
\begin{align*}
\frac{2\epsilon}{1-2\epsilon\|\No\|} \leq &\; 8 \sqrt{\log 2\frac{\sigma^2r^{2/\max(p,2)}}{m}}\left(1+\sqrt{2\log\eta^{-1}}\right)\:.
\end{align*}
Finally, we show that $\Kt_*$ is stabilizing for the true system $(Z,e_1,g^\tp)$. Since $(\Thetat_*,\alpha_*)$ is optimal for the approximate SLS synthesis problem for $\gt$, it is feasible, and thus $\alpha_*>0$ allows us to invoke Corollary~\ref{coro:special}, as we have that $\|\Nt_*\delta^\tp\|< 1\:$.

\else
\input{optimality-abridged}
\fi

\section{Experiments}
\label{sec:experiments}
\if\MODE1
%!TEX root = of-end-to-end.tex

The robust SLS procedure analyzed in the previous section requires solving an infinite-dimensional optimization problem as the responses $\{\Rt,\Nt,\Mt,\Lt\}$ are not required to be FIR. However, as an approximation, we limit them to be FIR responses of a prescribed length $T$.  By making this restriction, the resulting optimization problem is then finite-dimensional and admits an efficient solution using off-the-shelf convex optimization solvers\footnote{Code for these computations can be found at \texttt{https://github.com/rjboczar/OF-end-to-end-CDC}}.

Figure \ref{fig:approx} shows a quantification of this approximation. In this experiment, for each $r$, we chose random FIR plants with impulse response coefficients uniformly distributed in $[-1,1]^r$. We then computed the smallest $T(r)$ such that the robust performance returned by the SLS program was within 2\% relative error of the performance calculated by MATLAB's \texttt{hinfsyn} with relative tolerance $10^{-8}$. Figure \ref{fig:approx} also shows this calculation when each plant was normalized to have unit $\hinf$-norm.
\begin{figure}[t]
\centering
\includegraphics[width=.8\linewidth]{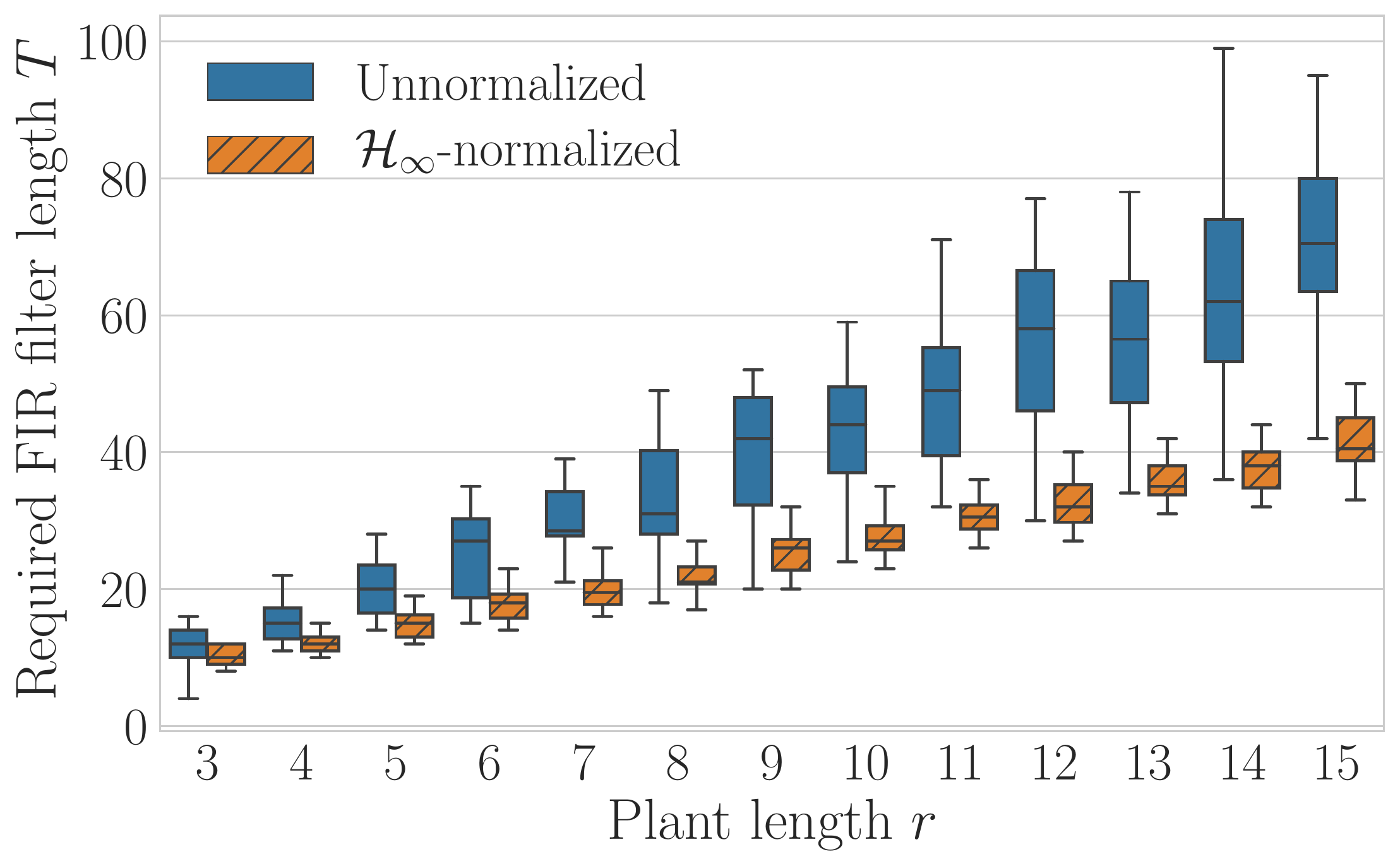}
\caption{FIR approximation length $T$ required to achieve small relative error in the robust performance objective. For each plant length $r$, the boxes denote the middle quartiles for $10$ random plants, and the whiskers show the extents of the data.}
\label{fig:approx}
\end{figure}

\subsection{Optimization Model}
Let $\vec(\tf F) := \left[\tf F_0^\tp \:\ldots \tf\: \tf F_{n-1}^\tp\right]^\tp$, $\rvec(\tf F) := \left[\tf F_0 \: \ldots \tf\: \tf F_{n-1}\right]$, and $\tf I$ be the (static) identity transfer function. Furthermore, appealing to the SDP characterization of $\hinf$-bounded FIR systems (\cite{dumitrescu2017positive} Thm. 5.8), define 
\begin{align*}
H(Q,\tf F,\gamma) := &\; \begin{bmatrix}
Q & \vec(\tf F) \\ \vec(\tf F)^\tp & \gamma I 
\end{bmatrix}\:.
\end{align*}
Then, under the approximate assumption that $\Rt, \Nt, \Mt, \Lt$ are FIR of length $T$, and using the notation $Q_{[j,k]}$ for the $(j,k)$-th block of $Q$, we can write the full optimization problem of solving~\eqref{eq:robust_synthesis} for a fixed $\alpha$:
\begin{align*}
\underset{\substack{\{\Rt,\Mt,\Nt,\Lt\},\\ t_1, 
\: t_2,\:t_3,\:t_4}}{\minimize}&\; t_1 + t_2 \\
\st \;&
\begin{bmatrix}\rvec(\Rt) & 0\end{bmatrix} -\begin{bmatrix}0 & A\rvec(\Rt)\end{bmatrix} =  \begin{bmatrix}0 & B_2\rvec(\Mt) + \rvec(\tf I)\end{bmatrix}\\
&\begin{bmatrix}\rvec(\Nt) & 0\end{bmatrix} -\begin{bmatrix}0 & A\rvec(\Nt)\end{bmatrix} = \begin{bmatrix}0 & B_2\rvec(\Lt)\end{bmatrix}
\\
& \begin{bmatrix}\vec(\Rt) \\ 0\end{bmatrix} -\begin{bmatrix}0 \\ \vec(\Rt)A\end{bmatrix} =\begin{bmatrix}0 \\ \vec(\Nt)C_2+\vec(\tf I)\end{bmatrix} \\
&\begin{bmatrix}\vec(\Mt) \\ 0\end{bmatrix} -\begin{bmatrix}0 \\ \vec(\Mt)A\end{bmatrix} = \begin{bmatrix}0 \\ \vec(\Lt)C_2\end{bmatrix} \\
&\substack{1\leq i \leq 4 \\0\leq k\leq n}
\begin{cases}
H(Q^{(i)},\tf F^{(i)},t_i) \succeq 0\\
\sum_{j=1}^{T-k}Q^{(i)}_{[j+k,j]} = \delta_kt_i I
\end{cases}\\
& \tf F^{(1)} = \begin{bmatrix} \gt^\tp&0\\0&1\end{bmatrix}\begin{bmatrix}\Rt & \Nt \\ \Mt & \Lt\end{bmatrix}\begin{bmatrix}B_1 \\ D_{21}\end{bmatrix} + D_{11} \\
& \tf F^{(2)} = \epsilon \alpha \left(\Rt B_1 + \Nt D_{21}\right) \\
& \tf F^{(3)} = \epsilon \alpha\Nt, \quad\tf F^{(4)} = \begin{bmatrix} 1+ \gt^\tp\Nt\\\Lt\end{bmatrix}\\
& t_3 + t_4 \leq \alpha\:.
\end{align*}

\subsection{Computational Results} Instead of using Lemma~\ref{lem:coarseID} directly, we use a simulation-based technique\footnote{The technique involves inverting the Chernoff bound to generate random variable tail bounds that hold with high probability with respect to the simulated instances. See the Appendix of \cite{tu17} for details.} (based on looking at the empirical histogram) to achieve tighter probabilistic tail bounds on $\|\delta\|_2$.

In what follows, we consider the following quantities: 
\begin{enumerate}[(i)]
    \item $J_{\text{nominal}}$: the cost achieved on the true system when the controller was designed using \texttt{hinfsyn} with the approximate system
    \item $\hat J$: the cost achieved on the true system when the controller was designed using the approximate SLS synthesis procedure
    \item $\delta J = \frac{J_{\text{nominal}} - \hat J}{J_{\text{nominal}}}$: the relative improvement of the approximate SLS synthesis procedure
    \item $\Delta J,\;\widehat{\Delta J}$: the theoretical sub-optimality bound~\eqref{eq:final-bound} and the actual sub-optimality gap, respectively.
\end{enumerate}

Figure~\ref{fig:nominal} shows $\delta J$ for random instances of $\hinf$-normalized plants  of different lengths $r$, swept across number of experiments $m$. 
\begin{figure}[t]
\centering
\includegraphics[width=.75\linewidth]{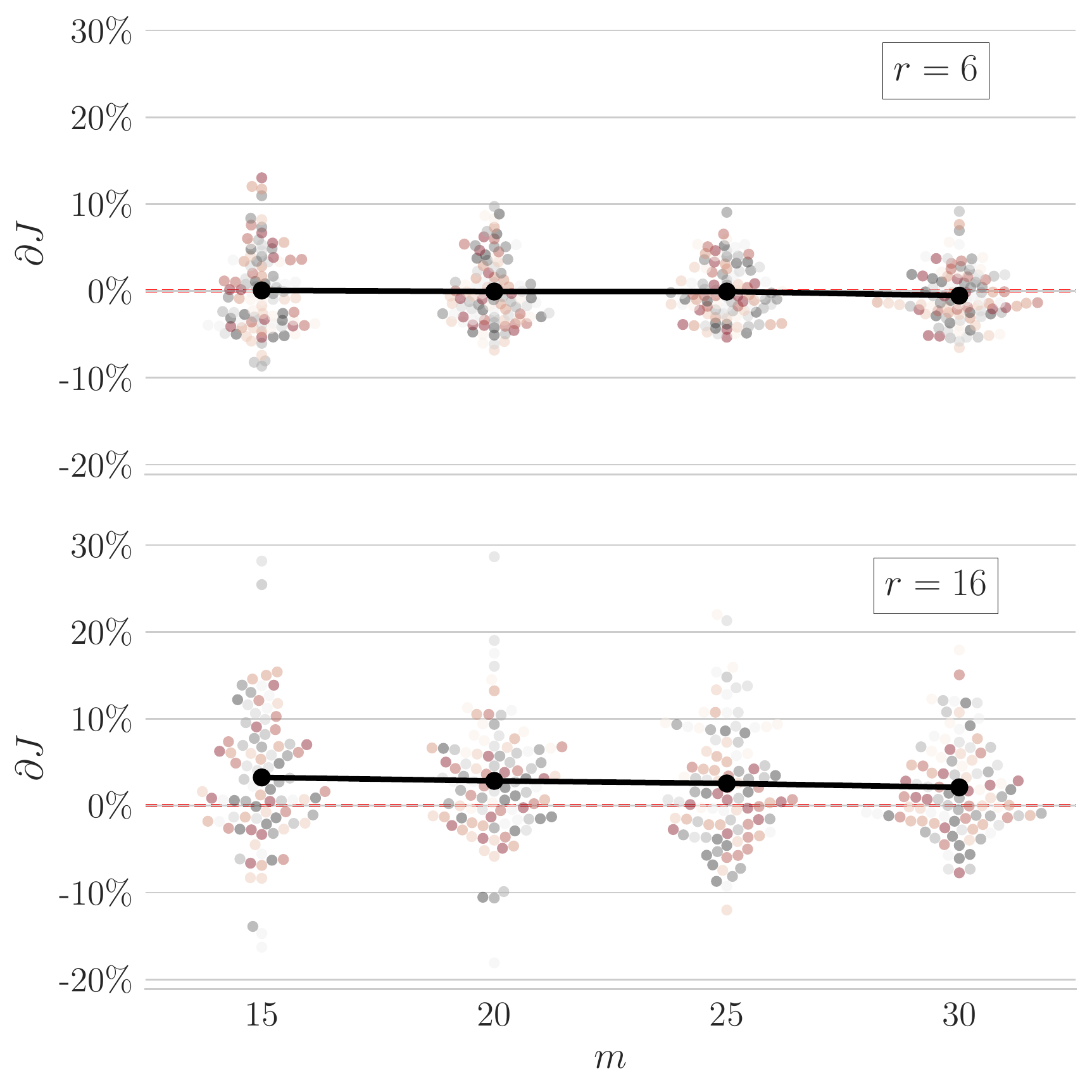}
\caption{``Swarm'' plot of relative improvement $\delta J$ from using the approximate SLS procedure across multiple random instances of plants and output noise ($8\times 10,\: \sigma=0.1$), for number of experiments $m\in\{15,20,25,30\}$. Each color dot represents a different plant.}
\label{fig:nominal}
\end{figure}

While it is difficult to draw precise quantitative conclusions from a suite of random plants, for the longer plants ($r=16$) the approximate SLS procedure does perform better on average. We hypothesize that the performance depends on an effective signal-to-noise ratio (SNR): $\frac{m}{\sigma^2 r}$. At low SNR, there may not be enough data for the approximate SLS procedure to be valid (i.e. for $\epsilon < (2\|\No\|)^{-1}$). At high SNR, $\gt$ is very close to $g$ and thus the approximate SLS procedure may be too conservative. Thus, large improvements may be hard to come by, which is seen as $m$ increases or $r$ decreases in Figure~\ref{fig:nominal}. Therefore, in between these cases may be where the procedure is most effective---the $r=16$ case may lie in this regime.
\begin{figure}[t]
\centering
\includegraphics[width=.6\linewidth]{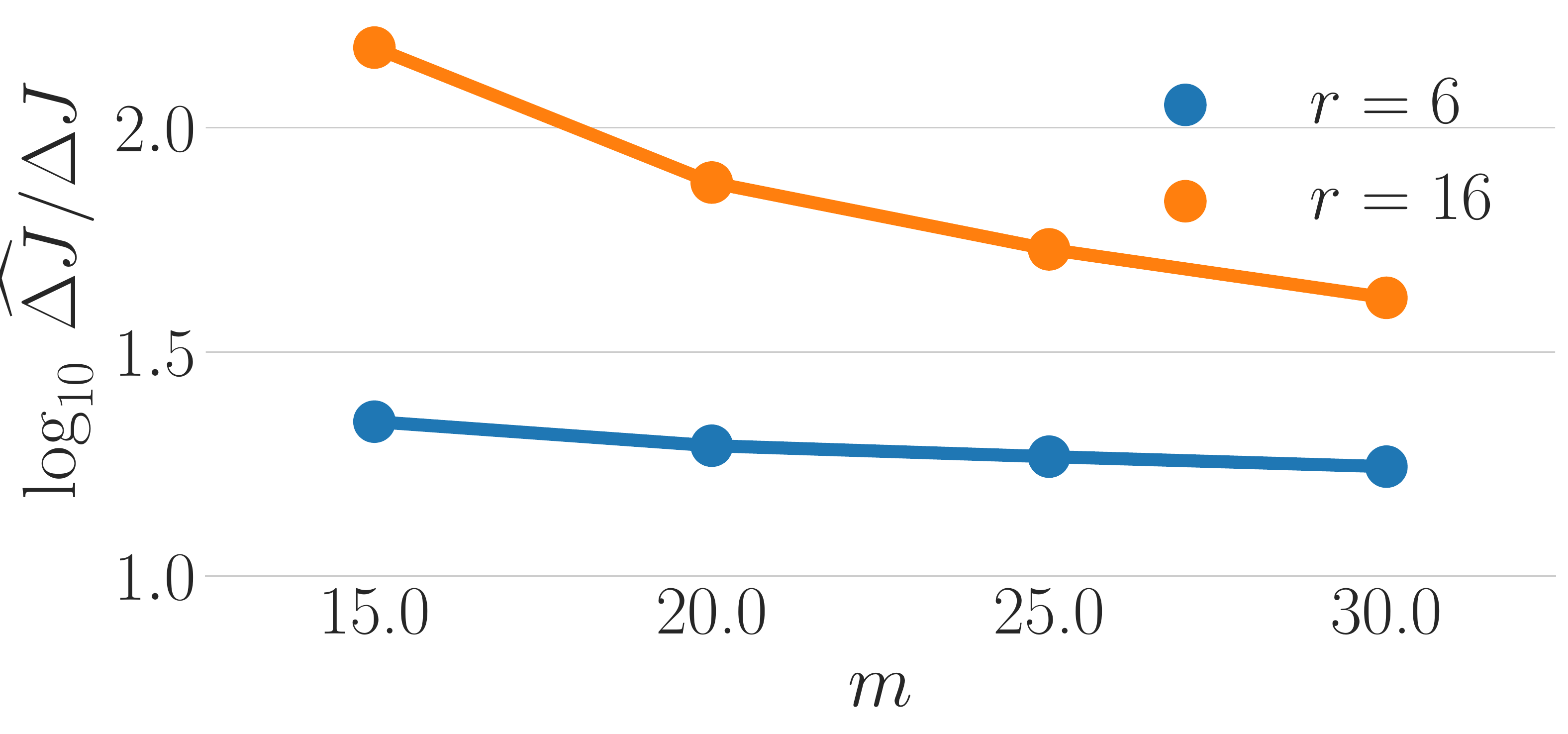}
\caption{Comparison on upper bound $\Delta J$ and actual suboptimality gap $\widehat{\Delta J}$.}
\label{fig:bound}
\end{figure}

Using the data generated for Figure~\ref{fig:nominal}, Figure~\ref{fig:bound} shows the looseness of the end-to-end bound in Proposition~\ref{prop:optimality}. This looseness is somewhat unsurprising, as both the formulation of the approximate SLS problem and the proof of Proposition~\ref{prop:optimality} feature multiple uses of the triangle inequality.
\else
\input{experiments-abridged}
\fi

\section{Conclusion}
\label{sec:conclusion}
%!TEX root = of-end-to-end.tex

In this work, we provide a computational tool for optimal output feedback control for the Coarse-ID setting, as well as a non-asymptotic analysis of its performance. Future work involves relaxing assumptions to allow IIR or unstable plants, the latter of which may require significant modifications to the Coarse-ID analysis. 

% \section{Acknowledgements}
% \label{sec:acknowledgements}

\begin{small}
\bibliographystyle{IEEEtran}
\bibliography{of}
\end{small}

\if\MODE1
%!TEX root = of-end-to-end.tex
\clearpage
\begin{appendices}
\section{Proof of Lemma~\ref{lem:robust-equiv}}
We restate the lemma for convenience.
\robustequiv*
\begin{proof}
\begin{itemize}
    \item (i)$\iff$(iii): The SLS constraints for $\tf G'$ and the robust SLS constraints for $\tf G$ are identical, by the definitions of $\tf \Delta_1$ and $\tf \Delta_2$. 
    \item (i)$\implies$(ii): 
    Satisfaction of~\eqref{eq:robust} under $(\Thetat, \tf G)$ implies satisfaction of~\eqref{eq:main_2} under $(\Thetah, \tf G)$ as they are related by a linear transformation defined by
    \begin{align*}
    V = &\; \begin{bmatrix}(I+\tf \Delta_1)^{-1} & 0\\
    -\tf\Delta_2(I+\tf\Delta_1)^{-1}& I\end{bmatrix}\:.
    \end{align*}
    We readily see that
    \begin{align*}
    V \begin{bmatrix} \Rt & \Nt \\ \Mt & \Lt \end{bmatrix}  = &\; \begin{bmatrix} \Rh & \Nh \\ \Mh & \Lh \end{bmatrix}\quad\text{and}\quad
    V \begin{bmatrix}I+\tf\Delta_1\\\tf\Delta_2\end{bmatrix} =
    \begin{bmatrix}I\\0\end{bmatrix} \:.
    \end{align*}     

    Next, satisfaction of~\eqref{eq:main_1} under $(\Thetah, \tf G)$ is implied by satisfaction of~\eqref{eq:main_1} and~$\eqref{eq:robust}$ under $(\Thetat, \tf G)$. To see this, note that multiplying~$\eqref{eq:robust}$ on the right by $[zI-A;\:-C_2]$ gives $B_2\tf\Delta_2 = (zI-A)\tf\Delta_1$. One can then verify that
    \begin{align*}
    \begin{bmatrix}zI-A&-B_2\end{bmatrix}
    \begin{bmatrix} \Rh & \Nh \\ \Mh & \Lh \end{bmatrix} = &\; \begin{bmatrix}I&0\end{bmatrix}\:.
    \end{align*}

    Finally, consider~\eqref{eq:main_stable}. The second equation of~\eqref{eq:robust}, i.e.
    \begin{align*}
    \Mt(zI-A) - \Lt C_2 = \tf\Delta_2\:,
    \end{align*}
    shows that $\tf\Delta_2\in\RHinf$. Combined with our assumption that $(I+\tf\Delta_1)^{-1}\in\RHinf$, by the definition of $\Thetah$ we may assert that $z\Rh,z\Mh,z\Nh,\Lh\in\RHinf$.
\end{itemize}
\end{proof}
\section{Coarse-grained ID Results}
We restate the lemma for convenience.
\coarseID*
\subsection{Setup}
We repeat the coarse-grained identification setup from Tu et al.~\cite{tu17}.
We are given query access to
$G$ via the form
\begin{align*}
    \tf y_i = \tf G\tf u_i + \xi \:, \:\: \xi\: \iid\:  \mathcal{N}(0, \sigma^2 I_{T}) \:,
\end{align*}
where $u \in \mathcal{U}$ for some fixed set $\mathcal{U}$.  Now, fix a set of $m$ inputs $u_1, ..., u_m \in \mathcal{U}$. Given the resulting outputs $\{\tf y_i\}_{i=1}^{m}$, we can estimate the first $T$ coefficients $\{g_k\}_{k=0}^{T-1}$ of $G(z) = \sum_{k=0}^{\infty} g_k z^{-k}$ via ordinary least-squares. Calling the vector $Y := (\tf y_1, \ldots, \tf y_m) \in \R^{Tm}$, the least-squares estimator
$\gt_{0:T-1}$ is given
by
\begin{align*}
  \gt_{0:T-1} := &\;\begin{bmatrix}
    \gt_0 &
    \gt_1 &
    \cdots &
    \gt_{T-1} &
  \end{bmatrix}^\tp
  = (Z^\tp Z)^{-1} Z^\tp Y \:,\\
\text{where}\,  Z := &\; \begin{bmatrix} \Toep(\tf u_1)^\tp & \cdots & \Toep(\tf u_m)^\tp \end{bmatrix}^\tp \in \R^{Tm \times T} \:.
\end{align*}

Explicitly, for a vector $u \in \R^T$, $\Toep(u)$ is the $T \times T$ lower-triangular Toeplitz matrix where the first column is equal to $u$. We assume the matrix $Z^\tp Z$ is invertible, which is verified for the inputs we use in \cite{tu17}. Assuming $r\leq T$, by the Gaussian noise assumption, the error vector $\delta := g_{0:r-1} - \tilde g_{0:r-1}$ will also be Gaussian with a prescribed covariance. Since the covariance matrix  will play a critical role in our analysis to follow, we introduce the notation
\begin{align*}
  \Sigma(u) := \sum_{k=1}^{m} \Toep(\tf u_k)^\tp \Toep(\tf u_k) \:,
\end{align*}
where $m$ will be clear from context.
We will also use the shorthand notation $M_{[r]}$
to refer to upper left $r \times r$ block of $M$.

\subsection{Proof of Lemma~\ref{lem:coarseID}}
We readily see that $\delta \sim \mathcal{N}(0,\sigma^2 S)$ where $S = (\Sigma(u)^{-1})_{[r]}$.  Then, noting that $||\delta||_2$ is a $\sigma\|S^{1/2}\|$-Lipschitz function of i.i.d. standard Gaussian random variables, from concentration of measure (see~\cite{boucheron2013concentration}, Theorem 5.6) we have that
\begin{align*}
P(||\delta||_2 \geq \E||\delta||_2 + t) \leq e^{-t^2/(2\sigma^2\|S\|)}\:,
\end{align*}
for all $t\geq 0$. Furthermore, by Jensen's inequality,
\begin{align*}
\E||\delta||_2 \leq \sqrt{\sum_{i=1}^r\E \delta_i^2} = \sigma\sqrt{\Tr(S)}\:.
\end{align*}{}
This then gives 
\begin{equation*}
||\delta||_2 \leq \sigma\left(\|S\|^{1/2}\sqrt{2\log\eta^{-1}} + \sqrt{\Tr(S)}\right)
\end{equation*}
with probability at least $1-\eta$. To probabilistically guarantee small error, we would then like to minimize the right hand side over input signals. When $\mathcal U$ is a unit $\ell_p$-ball in $\R^T$,~\cite{tu17} provides relevant bounds on $S$.
\begin{lemma}[c.f. Section 3~\cite{tu17}] If $\mathcal U$ is a unit $\ell_p$-ball, then 
\begin{align*}
\min_{\tf u_i \in \mathcal{U}} \Tr(S) &\; \begin{cases}
= \frac{r}{m}, & p\in[1,2]\quad (\tf u_i^* = e_1) \\
\leq \frac{4\log 2}{m}r^{2/p}, & p\in (2,\infty] \:.
\end{cases}
\end{align*}
\end{lemma}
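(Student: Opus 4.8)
The plan is to handle the two ranges of $p$ separately, since they call for genuinely different experiment designs; only for $p\in[1,2]$ do I expect a self-contained proof, while for $p\in(2,\infty]$ I would import the design and its analysis from Goldenschluger~\cite{golden1998} / Tu et al.~\cite{tu17}, Section~3. A reduction that is useful for the second case: since $S=(\Sigma(\tf u)^{-1})_{[r]}$ is a principal submatrix of $\Sigma(\tf u)^{-1}$, we have $\Tr(S)=\sum_{i=1}^{r}e_i^\tp\Sigma(\tf u)^{-1}e_i\le r\,\lambda_{\max}(\Sigma(\tf u)^{-1})=r/\lambda_{\min}(\Sigma(\tf u))$, so it suffices to produce inputs with $\lambda_{\min}(\Sigma(\tf u))\ge\frac{m}{4\log2}\,r^{1-2/p}$.

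For $p\in[1,2]$, the upper bound is achieved by the impulse: $e_1\in\mathcal U$ and $\Toep(e_1)=I_T$, so choosing every $\tf u_i=e_1$ gives $\Sigma(\tf u)=mI_T$, hence $S=\tfrac{1}{m} I_r$ and $\Tr(S)=r/m$. For the matching lower bound, take any admissible inputs with $\Sigma(\tf u)$ invertible (singular choices make $\Tr(S)$ ill-defined / infinite and may be ignored). Cauchy--Schwarz applied to $\Sigma(\tf u)^{1/2}e_i$ and $\Sigma(\tf u)^{-1/2}e_i$ gives $(\Sigma(\tf u)^{-1})_{ii}\ge1/\Sigma(\tf u)_{ii}$. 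Moreover $\Sigma(\tf u)_{ii}=\sum_{k=1}^{m}\|\Toep(\tf u_k)e_i\|_2^2$, and the $i$th column of $\Toep(\tf u_k)$ is $\tf u_k$ truncated to its first $T-i+1$ entries (with $i-1$ leading zeros), so $\|\Toep(\tf u_k)e_i\|_2^2\le\|\tf u_k\|_2^2\le\|\tf u_k\|_p^2\le1$, where I used $\|x\|_2\le\|x\|_p$ for $p\le2$ together with the constraint $\|\tf u_k\|_p\le1$. Hence $\Sigma(\tf u)_{ii}\le m$ and $\Tr(S)=\sum_{i=1}^{r}(\Sigma(\tf u)^{-1})_{ii}\ge r/m$, giving equality.

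For $p\in(2,\infty]$ the impulse is highly suboptimal --- it only excites one coordinate --- and the right move is to spread the input energy, which the $\ell_p$ ball now permits: an $\ell_p$-unit vector can carry $\ell_2$ energy of order $T^{1-2/p}$. Following~\cite{tu17}, I would take the $m$ inputs to be suitably rescaled, spectrally balanced signals (arranged so that $\sum_{k}|\widehat{\tf u}_k|^2$ is essentially flat over frequency), each of $\ell_p$-norm $1$, and invoke their Chernoff-type concentration estimate --- which is where the $4\log2$ constant originates --- to conclude $\lambda_{\min}(\Sigma(\tf u))\ge\frac{m}{4\log2}\,r^{1-2/p}$. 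Plugging this into the reduction above yields $\Tr(S)\le\frac{4\log2}{m}\,r^{2/p}$.

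The genuinely hard step is this last one: exhibiting, for $p>2$, an input ensemble whose Gram matrix $\Sigma(\tf u)$ has uniformly controlled smallest eigenvalue with the correct dependence on $r$, $T$, and $m$; everything else is elementary linear algebra. Since the paper cites \cite{tu17}, Section~3 for exactly this statement, in the final write-up I would give the $p\le2$ case in full and defer the $p>2$ construction and its concentration analysis to that reference.
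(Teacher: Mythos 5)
Your proposal is correct, and in fact it does more than the paper: the paper offers no proof of this lemma at all, simply citing Section~3 of \cite{tu17}, so there is no in-paper argument to compare against. Your self-contained treatment of the $p\in[1,2]$ case is sound --- $\Toep(e_1)=I_T$ gives $\Sigma(\tf u)=mI_T$ and hence $\Tr(S)=r/m$, and the matching lower bound via $(\Sigma^{-1})_{ii}\ge 1/\Sigma_{ii}$ (Cauchy--Schwarz) together with $\Sigma_{ii}=\sum_k\|\Toep(\tf u_k)e_i\|_2^2\le\sum_k\|\tf u_k\|_2^2\le m$ (using $\|\cdot\|_2\le\|\cdot\|_p$ for $p\le 2$) is exactly right and establishes that $e_1$ is a genuine minimizer, which the bare statement only asserts parenthetically. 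For $p\in(2,\infty]$ your reduction $\Tr(S)\le r/\lambda_{\min}(\Sigma(\tf u))$ is valid and the deferral to \cite{tu17} mirrors what the paper itself does; the only caveat is that your attribution of the $4\log 2$ constant to a Chernoff-type eigenvalue concentration step is a guess about the internals of the cited construction rather than something you verify, so in a final write-up you should state that clause as an imported fact rather than as part of your argument. No gaps otherwise.
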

Thus, since $||S||\leq\Tr(S)$, in the most general case we have that 
\begin{equation*}
||\delta||_2 \leq 2\sigma \sqrt{\log 2\frac{r^{2/\max(p,2)}}{m}}\left(1+\sqrt{2\log\eta^{-1}}\right)
\end{equation*}
with probability at least $1-\eta$\:. If $p\in[1,2]$, $||S||$ can be computed to be $\frac{1}{m}$, and we have
\begin{equation*}
||\delta||_2 \leq 2\sigma \sqrt{\frac{r}{m}}\left(1+\sqrt{\frac{2\log\eta^{-1}}{r}}\right)
\end{equation*}
Finally, with respect to Corollary~\ref{cor:sc}, as noted in Section 3 of~\cite{tu17} we have that
$\delta\sim \mathcal{N}(0,\Lambda)$, where
\begin{align*}
\Lambda = &\;(Z^\tp Z)^{-1}Z^\tp(\sigma_w^2 \Toep(g)\Toep(g)^\tp+\sigma_\xi^2 I)Z(Z^\tp Z)^{-1}\\
\preceq &\; (\sigma_w^2\|\tf G\|_{\hinf}^2+\sigma_\xi^2)(Z^\tp Z)^{-1},
\end{align*}
where the inequality comes from the stability of $\tf G$.
\end{appendices}

\else
\input{appendix-abridged}
\fi

\end{document}